\theoremstyle{break}
\title{Affine twist deformation of a sphere with holes}
\author{Takayuki Masuda\footnote{Department of Mathematics, Graduate School of Science, Osaka University,
Toyonaka, Osaka 560-0043, Japan,
\texttt{t-masuda@cr.sci.math.osaka-u.ac.jp}}}
\date{}
\newtheorem{theorem}{Theorem}[section]
\newtheorem{proposition}[theorem]{Proposition}
\newtheorem{lemma}[theorem]{Lemma}
\newtheorem{remark}[theorem]{Remark}
\newtheorem{example}[theorem]{Example}
\newtheorem{definition}[theorem]{Definition}
\begin{document}
\maketitle

\section*{Abstract}
In this paper, we introduce a new parameter, the {\it affine twist parameter} for the affine deformation of a sphere with holes. We show that the affine deformation space can be parametrized by Margulis invariants and affine twist parameters. The affine twist parameter is canonically regarded as a correspondence to the Fenchel-Nielsen twist parameter in Teichmuller theory.

\section{Introduction}
Let $S_{b+1} (b \geq 3)$ denote a sphere with $(b+1)$-boundaries. We fix a Fuchsian holonomy $\pi_1(S_{b+1}, pt) \to {\rm PSL}(2, {\mathbb R})$, which sends all peripheral curves to hyperbolic elements. The image of the holonomy is a free group of rank $b$, which is denoted by $G_b$. 
An {\it affine deformation} is a faithful representation $G_b \to \Gamma_b \subset SO^0(2,1) \ltimes {\mathbb R}^2_1$, which is defined by ${\rm PSL}(2, {\mathbb R}) \cong SO^0(2,1)$ and a cocycle ${\bf u}$. The cocycle is a map $G_b \to {\mathbb R}^2_1$. The space of cocycles is identified with the cohomology class ${\rm H}^1(G_b, {\mathbb R}_1^2)$. 
The group $\Gamma_b$ naturally acts a $3$-dimensional Minkowski space $E_1^2$ isometrically, and is called an {\it affine transformation group}. 
The property whether the affine action is properly discontinuous and free depends on the cocycle. We are interested in the set ${\bf Proper}_b$ in ${\rm H}^1 (G_b, {\mathbb R}_1^2)$, in which consists of cocycles that make affine transformation groups which act properly discontinuously on $E_1^2$.
\newline

When $b=2$ (a pair of pants) or, in general, a free group of rank two, the affine deformations of the Fuchsian group $G$ were deeply studied by V.Charette T.Drumm and W.Goldman. (See $\cite{cdg2010} \cite{cdg2012} \cite{cdg2014}$.) In their works, {\it Margulis invariants} have been used to parametrize ${\rm H}^1(G, {\mathbb R}_1^2)$. The Margulis invariant was first introduced by G.Margulis $\cite{m1983}$ in order to study the properly discontinuity of the affine transformation groups.
However if $b \geq 3$, it is difficult to handle ${\rm H}^1(G_b, {\mathbb R}_1^2)$ by using only Margulis invariants. 

Here we introduce the {\it affine twist parameters}; Consider a pants decomposition on $S_{b+1}$, which consists of $(b-2)$-dividing simple closed curves in $S_{b+1}$. We associate an affine twist parameter to each dividing curve. Together with Margulis invariants for dividing curves and boundary components, we get a $(3b-3)$-dimensional linear space $D_b$.

The purpose of this paper is to show that the parameter space $D_b$ canonically corresponds to the Fenchel-Nielsen parameter in Teichmuller theory. 
We first show the following.
\begin{theorem}
\label{Linear isomorphism}
There is a canonical isomorphism between $D_b$ and ${\rm H}^1(G_b, {\mathbb R}_1^2)$.
\end{theorem}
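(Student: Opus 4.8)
The plan is to compare the two spaces through the chosen pants decomposition and to compute $\mathrm{H}^1(G_b,{\mathbb R}^2_1)$ by a Mayer--Vietoris argument adapted to it. First I would record the dimension count. Since $G_b$ is free of rank $b$ and the Fuchsian holonomy is nonelementary, the module ${\mathbb R}^2_1$ carries no nonzero $G_b$-invariant vector, so the two--term free resolution of ${\mathbb Z}$ over ${\mathbb Z}[G_b]$ gives $\dim \mathrm{H}^1(G_b,{\mathbb R}^2_1) = (b-1)\cdot 3 = 3b-3 = \dim D_b$. Hence it suffices to produce a linear injection (equivalently surjection) between them.

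Next I would use that $S_{b+1}$ has genus zero, so the dual graph of the decomposition is a tree with $b-1$ vertices (the pairs of pants, each with free vertex group of rank two) and $b-2$ edges (the dividing curves, each with cyclic edge group generated by a hyperbolic $\delta_e$); indeed the Euler count $V-E=(b-1)-(b-2)=1$ forces a tree. This realizes $G_b$ as an iterated amalgam over a tree of groups, and I would write the associated Mayer--Vietoris sequence in cohomology with coefficients in ${\mathbb R}^2_1$. Using $\mathrm{H}^0$ of each pants $=0$, $\mathrm{H}^0(\langle\delta_e\rangle,{\mathbb R}^2_1) = {\mathbb R}\,x_0(\delta_e)$ (the neutral, spacelike eigenvector), $\mathrm{H}^1(\text{pants})={\mathbb R}^3$ (the three boundary Margulis invariants, by Charette--Drumm--Goldman $\cite{cdg2014}$), and $\mathrm{H}^1(\langle\delta_e\rangle)={\mathbb R}$ together with $\mathrm{H}^2(G_b)=0$, the sequence collapses to the short exact sequence
\[
0 \longrightarrow \bigoplus_{e} \mathrm{H}^0(\langle\delta_e\rangle,{\mathbb R}^2_1) \xrightarrow{\;\partial\;} \mathrm{H}^1(G_b,{\mathbb R}^2_1) \xrightarrow{\;\mu\;} \ker d^1 \longrightarrow 0,
\]
where $d^1 : \bigoplus_v \mathrm{H}^1(\text{pants}_v) \to \bigoplus_e \mathrm{H}^1(\langle\delta_e\rangle)$ records, edge by edge, the discrepancy between the two Margulis invariants assigned to a dividing curve by its two adjacent pants.

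I would then identify the two ends of this sequence with the two families of coordinates defining $D_b$. The edge conditions cutting out $\ker d^1$ say precisely that the Margulis invariant of each dividing curve is unambiguous, so $\ker d^1$ is defined by $b-2$ equations inside ${\mathbb R}^{3(b-1)}$ and is parametrized by the $(b+1)+(b-2)=2b-1$ Margulis invariants of the boundary and dividing curves; thus $\mu$ is exactly the map recording those invariants. At the other end, $\partial$ sends the neutral data $x_0(\delta_e)$ to the class obtained by translating the two sides of $d_e$ relative to one another along $x_0(\delta_e)$, which is the infinitesimal affine twist along $d_e$ and the Minkowski analogue of the Fenchel--Nielsen twist flow. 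So the $(b-2)$-dimensional image of $\partial$ is the locus on which all Margulis invariants vanish, and the affine twist parameters $(\tau_{d_e})$ are designed to be coordinates there; assembling these with $\mu$ gives the candidate map $(\mu,\tau)\colon \mathrm{H}^1(G_b,{\mathbb R}^2_1)\to D_b$.

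To conclude I would check that $(\mu,\tau)$ is an isomorphism: a class with all boundary and dividing Margulis invariants zero lies in $\mathrm{im}\,\partial$ by exactness, and on $\mathrm{im}\,\partial$ the twist parameters are coordinates, so the kernel is trivial, and the equality of dimensions forces bijectivity. The main obstacle is this last point, namely that the affine twist parameters are genuinely well defined on cohomology classes and restrict to a linear isomorphism $\mathrm{im}\,\partial \xrightarrow{\sim} {\mathbb R}^{b-2}$. Concretely, for a class in $\ker\mu$ one writes its restriction to each pants as a coboundary $\partial m_v$, and $\tau_{d_e}$ is the $x_0(\delta_e)$-component of the jump $m_v-m_{v'}$ across $d_e$; I expect the real work to lie in showing that this jump is well defined modulo the vanishing Margulis data and that the resulting $(b-2)\times(b-2)$ pairing of twists against edges is nondegenerate, which is exactly where the tree structure of the decomposition is essential.
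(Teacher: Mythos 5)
Your argument is correct in outline but takes a genuinely different route from the paper. The paper works by hand: it first proves a linear-algebra lemma on a single pair of pants (using the consistently oriented condition to show that certain $3\times 3$ Gram-type matrices $A,B,C$ are invertible, so that the three boundary Margulis invariants together with three normalization constants determine the cocycle on that pants), then builds a cocycle inductively pants-by-pants to obtain a section $L_0$ of the Margulis-invariant data, defines the affine twist $\mathrm{AT}_k$ as the cocycle vanishing on $P_1,\dots,P_k$ and equal to the coboundary $\delta_{{\bf Y}_k^0}$ on the remaining pants, and finally checks injectivity of $({\bf \alpha},{\bf \beta},{\bf t})\mapsto[{\bf u}_0^{\alpha,\beta}+\sum_k t_k\,\mathrm{AT}_k]$ directly. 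Your Mayer--Vietoris sequence for the tree of groups packages the same structure conceptually: the connecting map $\partial$ applied to the neutral vector of a dividing curve $h_k$ is exactly the class of the paper's $\mathrm{AT}_k$, and $\ker d^1$ is the compatible Margulis data. Your approach buys transparency --- it explains why the twists are canonically indexed by dividing curves and span precisely the kernel of the Margulis-invariant map --- and it also disposes of your own closing worry for free: $\partial$ is injective because $\bigoplus_v H^0(P_v,{\mathbb R}^2_1)=0$, so no nondegeneracy of a twist-edge pairing needs to be verified. What it does not supply, and what the paper's pants lemma does, is (i) the input fact that $H^1$ of a single pants is ${\mathbb R}^3$ coordinatized by the three boundary Margulis invariants, which you import from Charette--Drumm--Goldman while the paper proves it from scratch, and (ii) an explicit splitting of your short exact sequence: as written, $\tau$ is defined only on $\ker\mu$, so $(\mu,\tau)$ becomes a map on all of $H^1(G_b,{\mathbb R}^2_1)$ only after choosing a section of $\mu$ --- precisely the normalizations $c_1^{\pm}=c_2^-=c_3^-=\cdots=0$ in the paper's construction of $L_0$. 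With such a section added your argument closes, and the explicit cocycle representatives the paper's construction produces are what is needed later for Theorem \ref{wolpert goldman margulis}.
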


The affine twist parameter $t_k$ has a relation with a cocycle ${\rm AT}_k$. We indicate that the cocycle ${\rm AT}_k$ is corresponding to the Fenchel-Nielsen twist; The Lorentzian space ${\mathbb R}_1^2$ is isometric to a Lie algebra $sl_2({\mathbb R})$. 
Using the correspondence, Goldman and Margulis \cite{gm2000} give a correspondence between the cocycle and the deformation of the hyperbolic structures of $G$.
It is the correspondence between the Margulis invariant and the infinitesimal deformation $\frac{d}{dt} L_g^{{\bf u}} (0)$ of a displacement length of an element $g$ of $G$. (See $\S \ref{relation to deformations of the hyperbolic structures}$.) 

We will show the following equation:
\begin{theorem}
\label{wolpert goldman margulis}
Let $h_l$ be a dividing curve in the pants decomposition of $S_{b+1}$, 
and $f_l$ be a simple closed curve which intersects $h_l$ twice and disjoint from the other dividing curves. Then
\begin{eqnarray*}
\frac{1}{2} \frac{d L_{f_l}^{{\bf u}}}{dt} (0) = \alpha_{{\bf u}_0}(f_l) + t_l (\cos{\theta_l} + \cos{\theta_l'})
\end{eqnarray*}
holds, for ${\bf u} = {\bf u}_0 + \sum_{k=1}^{b-2} t_k {\rm AT}_k$, where $\theta_l , \theta_l'$ are angles between $h_l$ and $f_l$ in $S_{b+1}$ at each of the two intersections.
\end{theorem}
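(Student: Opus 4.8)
The plan is to reduce the statement to the Goldman--Margulis correspondence, after which the right-hand side emerges from a cosine computation for the affine twist cocycle that mirrors Wolpert's twist--length formula in Teichm\"uller theory. By the correspondence recalled in $\S \ref{relation to deformations of the hyperbolic structures}$, the infinitesimal length variation is computed by the Margulis invariant,
\begin{align*}
\frac{1}{2}\frac{dL_g^{{\bf u}}}{dt}(0) = \alpha_{{\bf u}}(g) = \langle {\bf u}(g), x^0(g)\rangle ,
\end{align*}
where $x^0(g)$ is the spacelike unit neutral eigenvector of the linear part of $g$ and $\langle\,\cdot\,,\,\cdot\,\rangle$ is the Lorentzian form on $\mathbb{R}_1^2$. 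A first observation I would record is that $\alpha_{{\bf u}}(g)$ depends only on the cohomology class of ${\bf u}$: for a coboundary $\delta v(g) = v - g\cdot v$ one has $g\cdot x^0(g) = x^0(g)$, so $\langle v - g\cdot v , x^0(g)\rangle = 0$. Since $\alpha_{{\bf u}}(g)$ is moreover linear in ${\bf u}$, substituting ${\bf u} = {\bf u}_0 + \sum_{k=1}^{b-2} t_k {\rm AT}_k$ yields
\begin{align*}
\frac{1}{2}\frac{dL_{f_l}^{{\bf u}}}{dt}(0) = \alpha_{{\bf u}_0}(f_l) + \sum_{k=1}^{b-2} t_k\, \alpha_{{\rm AT}_k}(f_l) ,
\end{align*}
so the theorem reduces to the two identities $\alpha_{{\rm AT}_k}(f_l) = 0$ for $k \neq l$ and $\alpha_{{\rm AT}_l}(f_l) = \cos\theta_l + \cos\theta_l'$.

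For the vanishing, I would use the cohomology invariance just noted: the class ${\rm AT}_k$ admits a representative supported on the crossings of a loop with $h_k$. As $f_l$ is disjoint from $h_k$ for $k \neq l$, this representative vanishes on $f_l$, whence $\alpha_{{\rm AT}_k}(f_l) = 0$. This is the Lorentzian counterpart of the elementary fact that a Fenchel--Nielsen twist along a curve disjoint from $f_l$ leaves the length of $f_l$ stationary.

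The heart of the proof is the evaluation of $\alpha_{{\rm AT}_l}(f_l)$. Since $f_l$ crosses $h_l$ exactly twice, I would write $f_l$ as a cyclically reduced word that passes across $h_l$ at its two intersection points, and evaluate the twist representative of ${\rm AT}_l$ using the cocycle relation ${\bf u}(gh) = {\bf u}(g) + g\cdot {\bf u}(h)$. The elementary contribution of the twist at a crossing is an ${\rm Ad}$--translate of the axis vector $x^0(h_l)$, so ${\rm AT}_l(f_l)$ splits as a sum of exactly two terms, one per intersection point. Pairing with $x^0(f_l)$ and applying the standard identity in $sl_2(\mathbb{R}) \cong \mathbb{R}_1^2$ --- that the Lorentzian inner product of the unit spacelike axis vectors of two geodesics crossing in $\mathbb{H}^2$ equals the cosine of their angle of intersection --- each term contributes the cosine of the angle at the corresponding crossing, giving $\cos\theta_l + \cos\theta_l'$.

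The main obstacle is the geometric bookkeeping inside this last step. One must transport the axis of $h_l$ to each of the two intersection points by the correct prefix of the word $f_l$ and check that, after this ${\rm Ad}$--translation, the pairing $\langle {\rm Ad}(\cdot) x^0(h_l), x^0(f_l)\rangle$ genuinely reproduces the inner product of the two axis vectors as measured at that point of $\mathbb{H}^2$, with the correct sign and with $\cos\theta_l$ and $\cos\theta_l'$ attached to the correct crossings; this is where a careful choice of lifts and frames is needed. One must also confirm that the chosen representative of ${\rm AT}_l$ is cohomologous to the affine twist class, so that the computation rests on a legitimate cocycle. Once these points are settled, the identity is the exact Minkowski analogue of Wolpert's cosine formula, and assembling the three steps proves the theorem.
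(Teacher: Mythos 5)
Your proposal is correct and follows essentially the same route as the paper: reduce via the Goldman--Margulis formula to computing $\alpha_{\bf u}(f_l)$, use linearity and the vanishing of the Margulis invariant on coboundaries to kill the contributions of ${\rm AT}_k$ for $k\neq l$ (the paper does this by noting ${\rm AT}_k$ is a coboundary on $P_l * P_{l+1}$ for $k<l$ and identically zero there for $k>l$), and then evaluate the single surviving term $B(\delta_{{\bf Y}_l^0}(g_{l+1}),{\bf X}_{f_l}^0)$ as a two-term sum of cosines via the inner-product/angle lemma. The "geometric bookkeeping" you flag as the main obstacle is exactly what the paper's final lemma carries out, using $g_{l+2}^{-1}=f_l g_{l+1}$ and the identification of the second crossing's angle as $\pi-\theta_l'$.
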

Theorem $\ref{wolpert goldman margulis}$ indicates that the affine twist (parameter), in our sense, correspondents to the Fenchel-Nielsen twist, in the sense of Goldman and Margulis as above discussion: Theorem $\ref{wolpert goldman margulis}$ has a similarity with the work by S.Wolpert, where he showed the correspondence between the twists and the angles by closed curves. $(\cite{w1981}.)$
\newline

Finally we remark a classification for elements in ${\bf Proper}_b$. 
The author finds a part of ${\bf Proper}_b$ in $D_b$.
In order to find the elements in ${\bf Proper}_b$, we use {\it crooked planes}.
They were used by Charette, Drumm and Goldman to classify the properly discontinuous affine transformations $(\cite{cdg2010} \cite{cdg2012} \cite{cdg2014})$.
The way they classified them is to assign disjoint crooked planes.
The disjointness of crooked planes implies that the corresponding representation acts properly discontinuously on $E_1^2$.
The author applies the way for the affine deformations of $S_{b+1}$. Then the author finds the part of ${\bf Proper}_b$. Furthermore we can find a relation between the crooked planes and the affine twists.
\newline

In \S \ref{notation}, we will explain basic theories of two geometries; a hyperbolic geometry of the sphere with $(b+1)$-holes and a geometry of the Lorentzian space-time. In \S \ref{affine deformations}, we will explain affine deformations of the sphere with $(b+1)$-holes, and proof two important lemmas for $\S \ref{Linear space of affine deformations}$. In $\S \ref{Linear space of affine deformations}$, we will proof Theorem $\ref{Linear isomorphism}$. In \S \ref{relation to deformations of the hyperbolic structures}, we will refer to the relation between the cocycles and the deformations of the hyperbolic structures, and then we will proof Theorem $\ref{wolpert goldman margulis}$. In \S \ref{properly discontinuous action}, we will remark the part of ${\bf Proper}_b$.

\section*{Acknowledgment}
The author is grateful to Professor Hideki Miyachi for many important advice.
Furthermore the author would like to thank Professor Todd Drumm for many interesting discussion about this paper, as well as Professor William Goldman and Professor Suhyoung Choi for helpful advice at MSRI.

\section{Notation}
\label{notation}
\subsection{Hyperbolic geometry of punctured spheres}
\label{hyperbolic geometry}
For $b \geq 3$, let $S_{b+1} :=S_{(0, b+1)}$ be a sphere with $(b+1)$-boundaries. We fix a faithful representation (we call a {\it holonomy}) $\rho_0$: $\pi_1(S_{b+1}, pt) \to {\rm PSL}(2, {\mathbb R})$. ({\it i.e.} fix the hyperbolic structure.) We always identify ${\rm PSL(2, {\mathbb R})}$ with $SO^0(2,1)$. Hence ${\mathbb H}^2 / G_b \cong S_{b+1}$. 
Note that the group $G_b$ is a free group of rank $b$. Thus it denotes: 
\begin{eqnarray}
G_b = \langle g_1, g_2, \ldots, g_{b}, g_{b+1} \mid g_1 \cdot g_2 \cdots g_b \cdot g_{b+1} = id \rangle,
\end{eqnarray}
where each generator $g_i$ corresponds to a boundary component of $S_{b+1}$. We denote the set of this indexes of the generators by ${\mathbb I} :=\{ 1, 2, \ldots, b+1 \}$. (See Figure $\ref{hyperbolic disk}$.)

\begin{figure}[htbp] 
\begin{center}
\begin{tabular}{c}
\begin{minipage}{0.5\hsize}
\begin{center}
\includegraphics[width=6cm]{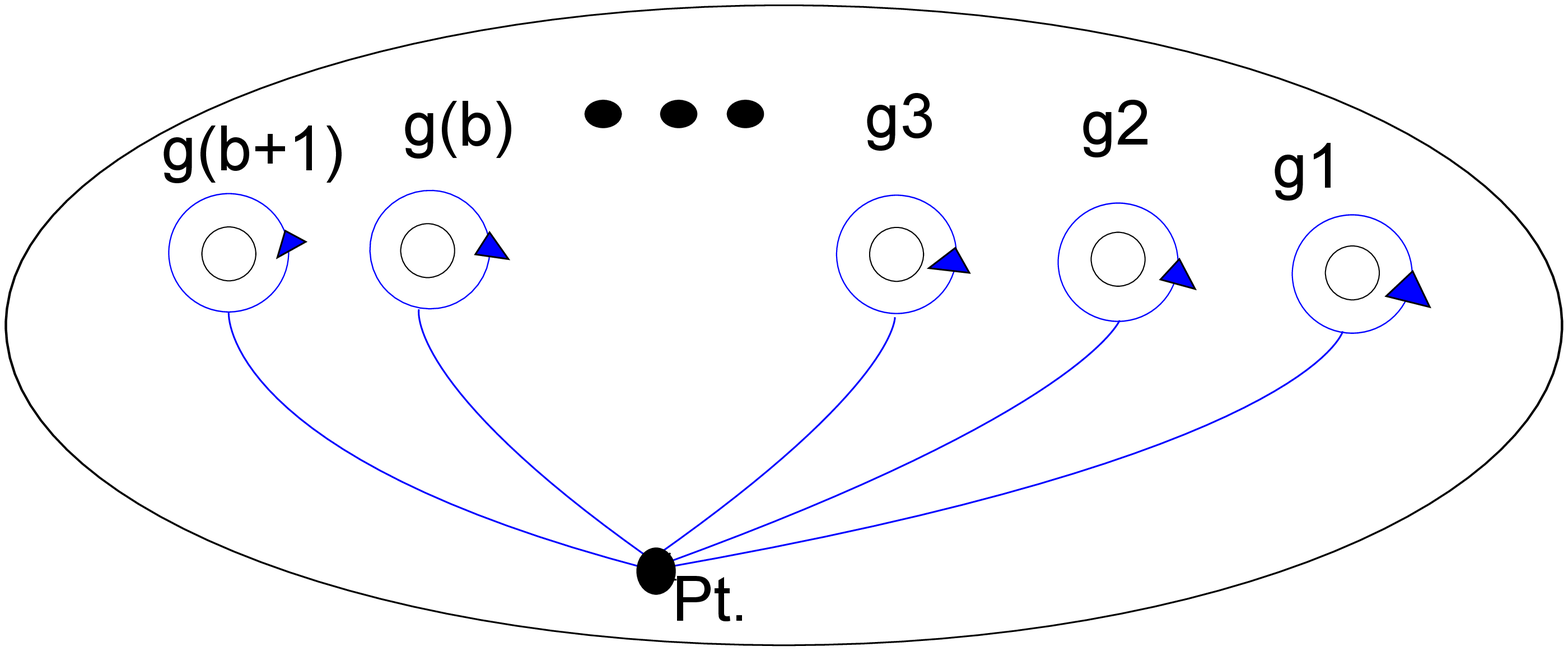} 
\hspace{1.6cm}
\end{center}
\end{minipage}
\begin{minipage}{0.33\hsize}
\begin{center}
\includegraphics[width=4cm]{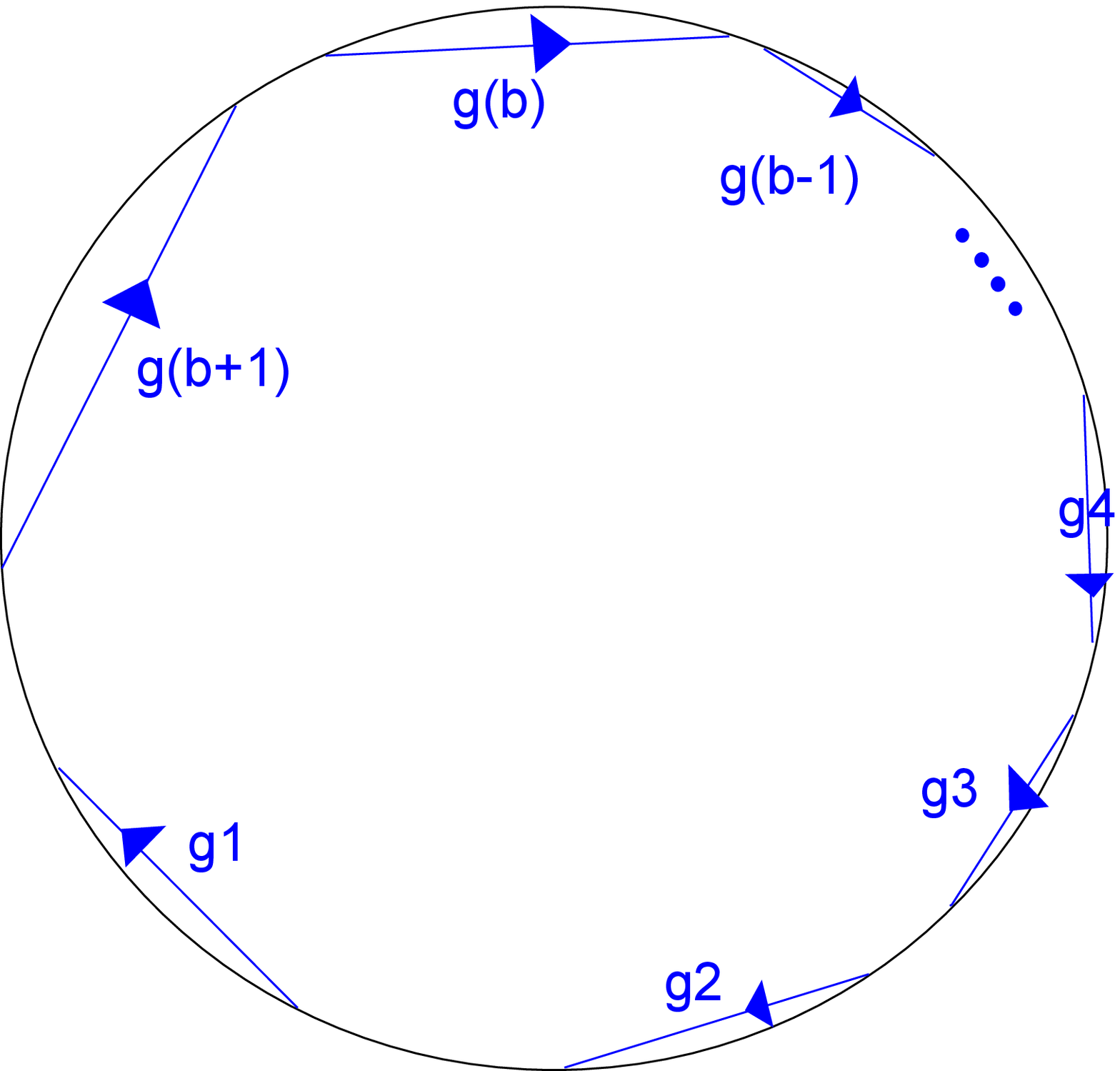} 
\hspace{1.6cm}
\end{center}
\end{minipage}
\end{tabular}
\caption{Developing $S_{b+1}$ into ${\mathbb H}^2$} 
\label{hyperbolic disk} 
\end{center}
\end{figure} 

\subsubsection{Fenchel-Neilsen coordinates}
Here we mainly focus on decomposing $S_{b+1}$ into pairs of pants. 
For $j \in {\mathbb J} := \{ 1, 2, \ldots, b-2 \}$, we define
\begin{eqnarray}
h_j := g_{j+1}^{-1} \cdot g_{j}^{-1} \cdots g_1^{-1}.
\end{eqnarray}
Each $h_j$ corresponds to a dividing curve when $S_{b+1}$ is decomposed like Figure $\ref{pants decomposition}$. The subgroup $P_j < G_b$ is defined by:
\begin{eqnarray}
P_1 &:=& \langle g_1, g_2, h_1 \mid g_1 \cdot g_2 \cdot h_1 = id \rangle \\
P_j &:=& \langle h_{j-1}, g_{j+1}, h_j \mid h_{j-1}^{-1} \cdot g_{j+1} \cdot h_j = id \rangle, \,\,\, 2 \leq j \leq b-2 \\
P_{b-1} &:=& \langle h_{b-2}, g_b, g_{b+1} \mid h_{b-2}^{-1} \cdot g_b \cdot g_{b+1} = id \rangle.
\end{eqnarray}
Then we can have the $(b-1)$-pairs of pants ${\mathbb H}^2 / P_j$ from ${\mathbb H}^2 / G_b$. We also call each $P_j$ a {\it pair of pants}. 

\begin{figure}[htbp] 
\begin{center}
\begin{tabular}{c}
\begin{minipage}{0.5\hsize}
\begin{center}
\includegraphics[width=6cm]{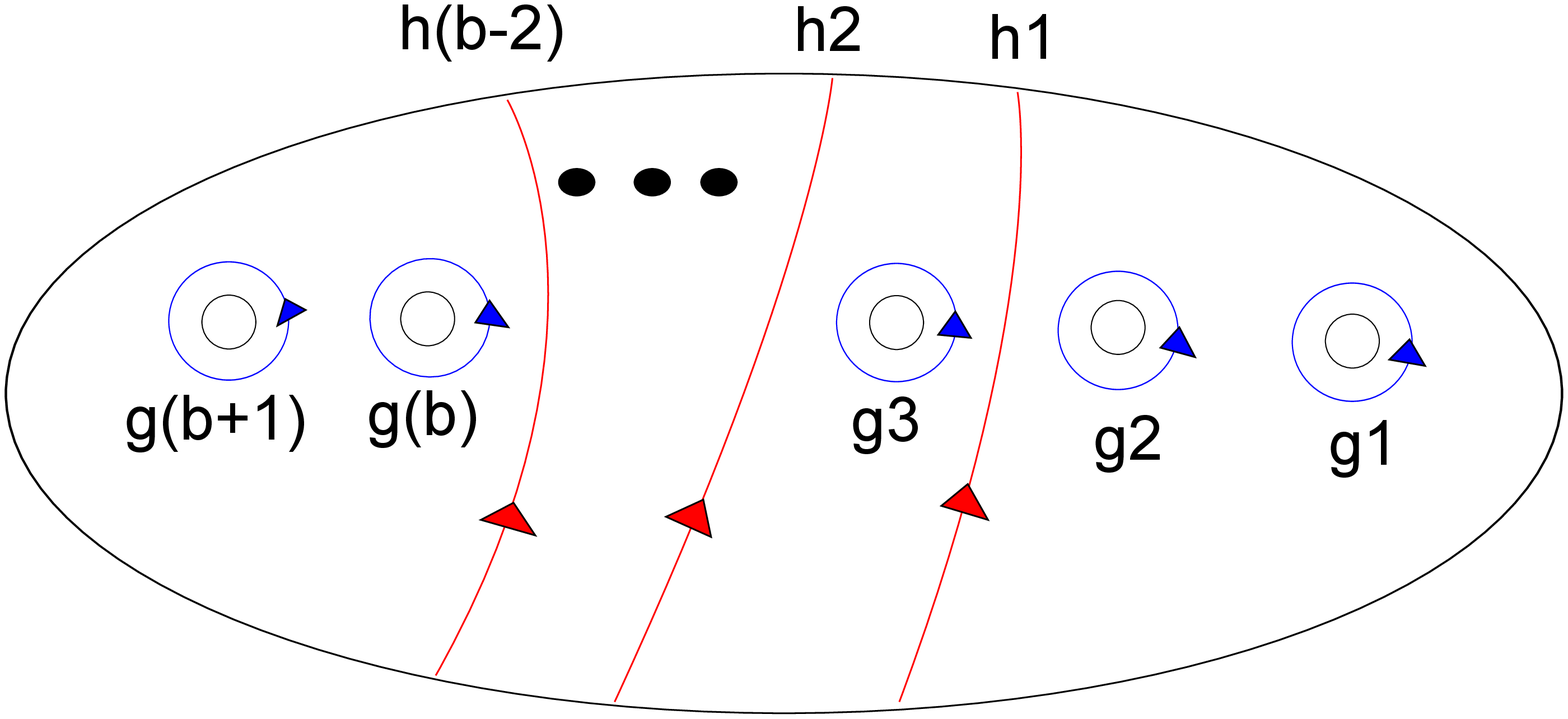} 
\hspace{1.6cm}
\end{center}
\end{minipage}
\begin{minipage}{0.33\hsize}
\begin{center}
\includegraphics[width=4cm]{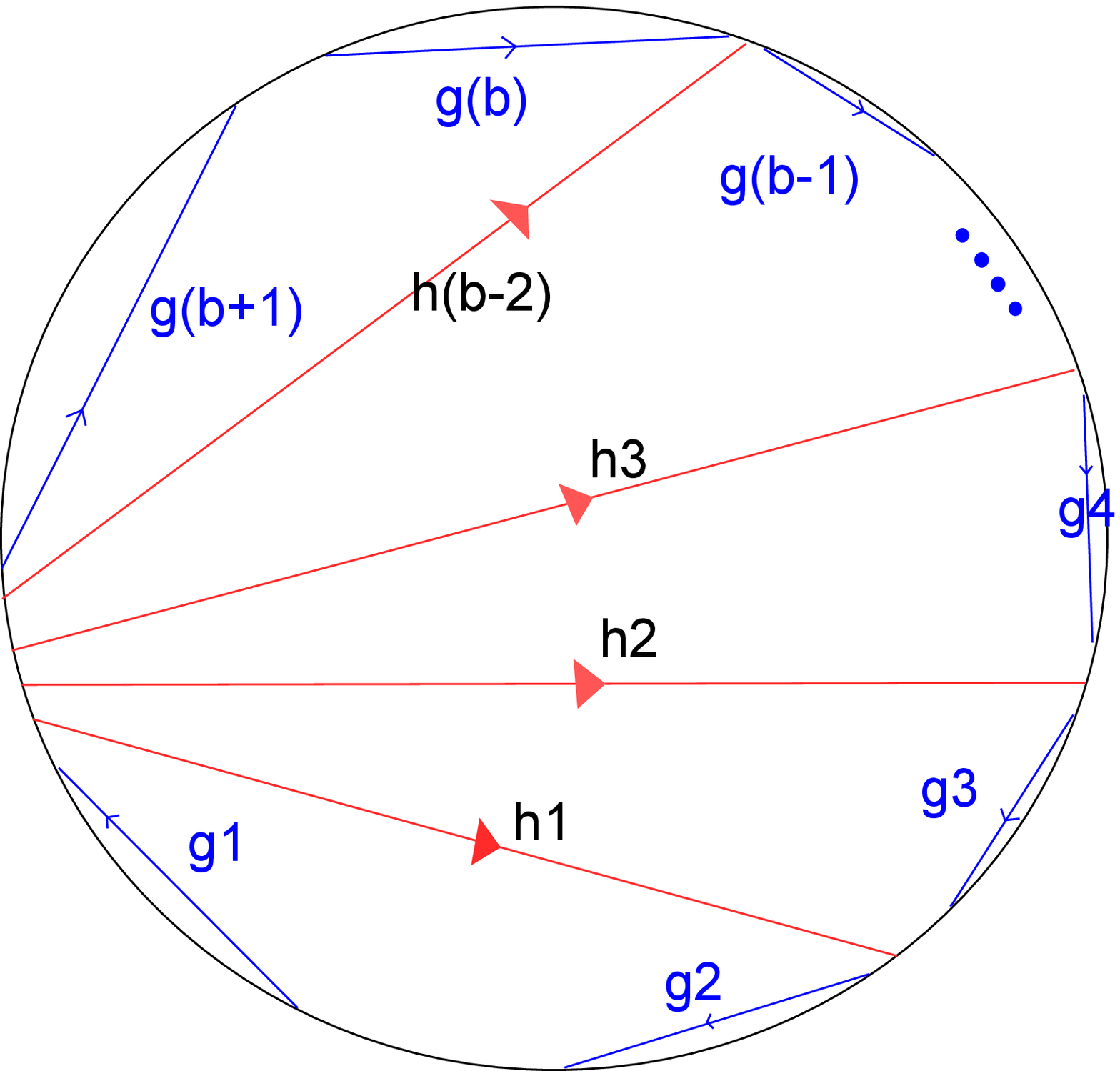} 
\hspace{1.6cm}
\end{center}
\end{minipage}
\end{tabular}
\caption{Pants decomposition}
\label{pants decomposition} 
\end{center}
\end{figure} 

\begin{remark}[Fenchel-Nielsen]
\label{fenchelnielsen}
The Teichmuller space of a topological sphere with $(b+1)$-holes is to be :
\begin{eqnarray*}
{\mathbb R}^{3b-3} \cong  {\mathbb R}^{2b-3} \times {\mathbb R}^{b-1}.
\end{eqnarray*}
Here the former part of the direct product represents the lengths of geodesics corresponding to dividing curves and the boundaries, and the latter does the twists along the dividing curves. 
\end{remark}

\subsection{Lorentzian Geometry}
\subsubsection{Minkowski space-time}
The {\it Minkowski space-time} is an affine space $E_1^2$ with the underlying space ${\mathbb R}_1^2$, where ${\mathbb R}_1^2$ is a three dimensional vector space with the {\it Lorentzian inner product} $B({\bf x}, {\bf y}) = x_1 y_1 + x_2 y_2 -x_3 y_3$ for ${\bf x}, {\bf y} \in {\mathbb R}_1^2$. The following definitions are given in $\cite{dg1990} \cite{cdg2010}$. 
\begin{definition}
\label{lorentzian vector product}
The {\rm Lorentzian vector product} with respect to $B$ is the map $\boxtimes:{\mathbb R}_1^2 \times {\mathbb R}_1^2 \to {\mathbb R}_1^2$, which satisfies the following equations:
For ${\bf x}, {\bf y}, {\bf z}, {\bf w} \in {\mathbb R}_1^2$,
\begin{itemize}
\item[$(1)$] $\det{({\bf x}, {\bf y}, {\bf z})} = B({\bf x} \boxtimes {\bf y}, {\bf z})$,
\item[$(2)$] $B({\bf x} \boxtimes {\bf y}, {\bf z} \boxtimes {\bf w}) = B({\bf x}, {\bf w}) B({\bf y}, {\bf z}) - B({\bf x}, {\bf z}) B({\bf y}, {\bf w})$.
\end{itemize}
\end{definition}

The {\it light cone} $C$ is defined by $B({\bf x}, {\bf x}) = 0$ in ${\mathbb R}_1^2$.
 A vector in $C$ is called {\it null} or {\it lightlike}.
\begin{definition}
${\bf x} \in {\mathbb R}_1^2$ is called a {\rm spacelike vector} if $B({\bf x}, {\bf x})>0$,
 and a {\rm timelike vector} if  $B({\bf x}, {\bf x})<0$.
\end{definition}
The interior of the light cone $C$ has two connected components; {\it future-pointing} and {\it past-pointing}. The set of future-pointing unit timelike vectors is an open disk in ${\mathbb R}_1^2$, which has the Klein-Poincare hyperbolic structure induced from the inner product $B$ (See \cite{cdg2010} for detail.).

\subsubsection{Isometries of Minkowski space-time}
We denote the affine transformation group $SO(2,1)^0 \ltimes {\mathbb R}_1^2$ by ${\rm Isom}^0(E_1^2)$. The group ${\rm Isom}^0(E_1^2)$ preserves orientation of ${\mathbb R}_1^2$ and time-orientation. Every element $\gamma$ of ${\rm Isom}^0(E_1^2)$ is represented by $(g, {\bf u}(g))$, where $g$ is in $SO^0(2, 1)$ and ${\bf u}(g)$ is in ${\mathbb R}_1^2$. An element $g$ is called {\it hyperbolic} if $g$ has three different eigenvalues. We choose three normalized eigenvectors as follows:
\begin{enumerate}
\item[$(1)$] ${\bf X}_g^-$ has $\lambda_g$ as eigenvalue, $B({\bf X}_g^-, {\bf X}_g^-)=0$ and the Euclidean length is $1$,
\item[$(2)$] ${\bf X}_g^+$ has $\lambda_g^{-1}$ as eigenvalue, $B({\bf X}_g^+, {\bf X}_g^+)=0$ and the Euclidean length is $1$,
\item[$(3)$] ${\bf X}_g^0$ has $1$ as eigenvalue, $B({\bf X}_g^0, {\bf X}_g^0)=1$,
and $\det{({\bf X}_g^0, {\bf X}_g^-, {\bf X}_g^+)} > 0$,
\end{enumerate}
where  $0<\lambda<1$. Note that $\langle {\bf X}_g^-, {\bf X}_g^+ \rangle = ({\bf X} _g^0)^{\perp}$ with respect to $B$. A transformation $\gamma$ is also called {\it hyperbolic} if $g$ is hyperbolic. 
Note that these eigenvectors have the following relation:
\begin{lemma}
\begin{eqnarray}
\label{parallel}
{\bf X}_g^- \boxtimes {\bf X}_g^+ = -B({\bf X}_g^-, {\bf X}_g^+) {\bf X}_g^0
\end{eqnarray}
holds.
\end{lemma}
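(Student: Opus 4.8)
The plan is to first show that $\mathbf{X}_g^- \boxtimes \mathbf{X}_g^+$ is a scalar multiple of $\mathbf{X}_g^0$, and then to pin down that scalar. For the proportionality I would apply Definition~\ref{lorentzian vector product}$(1)$ with a repeated entry: a determinant with two equal columns vanishes, so $B(\mathbf{X}_g^- \boxtimes \mathbf{X}_g^+, \mathbf{X}_g^-) = \det(\mathbf{X}_g^-, \mathbf{X}_g^+, \mathbf{X}_g^-) = 0$, and likewise $B(\mathbf{X}_g^- \boxtimes \mathbf{X}_g^+, \mathbf{X}_g^+) = 0$. Hence $\mathbf{X}_g^- \boxtimes \mathbf{X}_g^+$ lies in the $B$-orthogonal complement of the plane $\langle \mathbf{X}_g^-, \mathbf{X}_g^+ \rangle$, which by the relation $\langle \mathbf{X}_g^-, \mathbf{X}_g^+ \rangle = (\mathbf{X}_g^0)^{\perp}$ recorded in the excerpt is a line; since $\mathbf{X}_g^0$ is a nonzero vector orthogonal to that same plane and $B(\mathbf{X}_g^0, \mathbf{X}_g^0) = 1 \neq 0$ makes the line $\langle \mathbf{X}_g^0 \rangle$ anisotropic, that line is exactly the complement, giving $\mathbf{X}_g^- \boxtimes \mathbf{X}_g^+ = c\, \mathbf{X}_g^0$ for some $c \in \mathbb{R}$. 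As an alternative I would use the $SO^0(2,1)$-equivariance of $\boxtimes$: because $g$ preserves $B$ and $\det g = 1$, one checks $g(\mathbf{x} \boxtimes \mathbf{y}) = g\mathbf{x} \boxtimes g\mathbf{y}$, so $g(\mathbf{X}_g^- \boxtimes \mathbf{X}_g^+) = (\lambda_g \mathbf{X}_g^-) \boxtimes (\lambda_g^{-1}\mathbf{X}_g^+) = \mathbf{X}_g^- \boxtimes \mathbf{X}_g^+$, exhibiting the product as a $1$-eigenvector of $g$ and hence a multiple of $\mathbf{X}_g^0$.

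Next I would determine $c$ in two steps. Pairing with $\mathbf{X}_g^0$ and using Definition~\ref{lorentzian vector product}$(1)$ gives $c = c\, B(\mathbf{X}_g^0, \mathbf{X}_g^0) = B(\mathbf{X}_g^- \boxtimes \mathbf{X}_g^+, \mathbf{X}_g^0) = \det(\mathbf{X}_g^-, \mathbf{X}_g^+, \mathbf{X}_g^0) = \det(\mathbf{X}_g^0, \mathbf{X}_g^-, \mathbf{X}_g^+)$, which is strictly positive by the orientation normalization $(3)$; thus $c > 0$. For the magnitude I would use Definition~\ref{lorentzian vector product}$(2)$ with $\mathbf{x} = \mathbf{z} = \mathbf{X}_g^-$ and $\mathbf{y} = \mathbf{w} = \mathbf{X}_g^+$: since $\mathbf{X}_g^-$ and $\mathbf{X}_g^+$ are null, the terms $B(\mathbf{X}_g^-, \mathbf{X}_g^-)$ and $B(\mathbf{X}_g^+, \mathbf{X}_g^+)$ drop out, leaving $c^2 = B(\mathbf{X}_g^- \boxtimes \mathbf{X}_g^+, \mathbf{X}_g^- \boxtimes \mathbf{X}_g^+) = B(\mathbf{X}_g^-, \mathbf{X}_g^+)^2$. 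Combined with $c > 0$ this yields $c = |B(\mathbf{X}_g^-, \mathbf{X}_g^+)|$.

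It then remains to establish $c = -B(\mathbf{X}_g^-, \mathbf{X}_g^+)$, that is, $B(\mathbf{X}_g^-, \mathbf{X}_g^+) \leq 0$, and this sign is the main obstacle. The normalization ``Euclidean length $1$'' fixes each null eigenvector only up to a sign, and flipping exactly one of them reverses the asserted identity, so the statement secretly relies on a time-orientation convention: the null eigenvectors are taken future-pointing, consistently with $\Gamma_b \subset {\rm Isom}^0(E_1^2)$ preserving time-orientation. For two linearly independent future-pointing null vectors a Cauchy--Schwarz estimate on the spatial parts gives $B(\mathbf{X}_g^-, \mathbf{X}_g^+) < 0$, and $\mathbf{X}_g^-, \mathbf{X}_g^+$ are independent since they lie in distinct eigenspaces, so the inequality is strict and $-B(\mathbf{X}_g^-, \mathbf{X}_g^+) = |B(\mathbf{X}_g^-, \mathbf{X}_g^+)| = c$. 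The delicate point I expect to spend the most care on is confirming that this time-orientation choice is compatible with the orientation normalization $(3)$ that forced $c > 0$; verifying that the two sign conventions are used consistently, rather than any hard computation, is where the genuine content of the argument lies.
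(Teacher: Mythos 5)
Your proof is correct and follows essentially the same route as the paper's: proportionality to ${\bf X}_g^0$ from $\langle {\bf X}_g^-, {\bf X}_g^+ \rangle = ({\bf X}_g^0)^{\perp}$, the magnitude from Definition~\ref{lorentzian vector product}$(2)$ using nullity of the eigenvectors, and the sign from $\det({\bf X}_g^0, {\bf X}_g^-, {\bf X}_g^+)>0$ together with $B({\bf X}_g^-, {\bf X}_g^+)<0$. The only substantive difference is that you justify $B({\bf X}_g^-, {\bf X}_g^+)<0$ via a future-pointing convention and a Cauchy--Schwarz estimate, whereas the paper simply asserts it; your caution is warranted, since the stated ``Euclidean length $1$'' normalization fixes the null eigenvectors only up to sign and the identity genuinely fails if ${\bf X}_g^-$ and ${\bf X}_g^+$ are chosen on opposite halves of the light cone.
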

\begin{proof}
Since $\langle {\bf X}_g^-, {\bf X}_g^+ \rangle = ({\bf X} _g^0)^{\perp}$, ${\bf X}_g^- \boxtimes {\bf X}_g^+$ is parallel to ${\bf X}_g^0$. By definition $\ref{lorentzian vector product}$, $B({\bf X}_g^- \boxtimes {\bf X}_g^+, {\bf X}_g^- \boxtimes {\bf X}_g^+) = B({\bf X}_g^-, {\bf X}_g^+)^2$. Since $B({\bf X}_g^-, {\bf X}_g^+)$ is negative and $\det{({\bf X}_g^0, {\bf X}_g^-, {\bf X}_g^+)}$ is positive, the equation $\eqref{parallel}$ holds.
\end{proof}

The following lemma gives a relation between angles in the hyperbolic geometry and in the Lorentzian geometry.
\begin{lemma}
\label{angle}
Let $g, h$ be hyperbolic elements in $SO(2,1)^0$. Suppose that the unique invariant lines which $g, h$ have in ${\mathbb H}^2$ are crossing. We can define an angle $\theta$ between the tangent vectors of $g, h$ at the intersection. Then
\begin{eqnarray}
\label{angle and product}
B({\bf X}_g^0, {\bf X}_h^0) = \cos{\theta}
\end{eqnarray}
holds.
\end{lemma}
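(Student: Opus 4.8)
The plan is to work in the hyperboloid model $\mathbb{H}^2 = \{ {\bf x} \in \mathbb{R}_1^2 : B({\bf x}, {\bf x}) = -1, \ {\bf x} \text{ future-pointing}\}$, in which geodesics are the intersections of $\mathbb{H}^2$ with timelike $2$-planes through the origin, and a unit spacelike vector ${\bf n}$ serves as the $B$-normal of the geodesic $\mathbb{H}^2 \cap {\bf n}^\perp$. The first step is to identify ${\bf X}_g^0$ as precisely such a normal for the axis of $g$. The null eigenvectors ${\bf X}_g^\pm$ span the two fixed directions of $g$ on $\partial \mathbb{H}^2$, so the invariant geodesic of $g$ is $\mathbb{H}^2 \cap \langle {\bf X}_g^-, {\bf X}_g^+\rangle$; since $\langle {\bf X}_g^-, {\bf X}_g^+\rangle = ({\bf X}_g^0)^\perp$ (as already noted in the excerpt), this axis equals $\{{\bf x} \in \mathbb{H}^2 : B({\bf x}, {\bf X}_g^0) = 0\}$ and ${\bf X}_g^0$ is its unit normal. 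The same applies to $h$.

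Let $p \in \mathbb{H}^2$ be the intersection point of the two axes, so that $B(p,p) = -1$ and $B(p, {\bf X}_g^0) = B(p, {\bf X}_h^0) = 0$. The key idea is to manufacture the tangent vectors of the two axes at $p$ through the Lorentzian product: set ${\bf t}_g := p \boxtimes {\bf X}_g^0$ and ${\bf t}_h := p \boxtimes {\bf X}_h^0$. Using part $(1)$ of Definition \ref{lorentzian vector product}, one has $B({\bf t}_g, p) = \det(p, {\bf X}_g^0, p) = 0$ and $B({\bf t}_g, {\bf X}_g^0) = \det(p, {\bf X}_g^0, {\bf X}_g^0) = 0$, so ${\bf t}_g$ lies in $T_p\mathbb{H}^2 = p^\perp$ and in the plane $({\bf X}_g^0)^\perp$ cutting out the axis; hence it is tangent to the geodesic of $g$ at $p$. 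Part $(2)$ then gives $B({\bf t}_g, {\bf t}_g) = B(p, {\bf X}_g^0)^2 - B(p,p)B({\bf X}_g^0, {\bf X}_g^0) = 1$, so ${\bf t}_g$, and likewise ${\bf t}_h$, is a unit spacelike vector.

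It then remains to compute the inner product. Applying part $(2)$ of Definition \ref{lorentzian vector product} once more,
\begin{eqnarray*}
B({\bf t}_g, {\bf t}_h) = B(p \boxtimes {\bf X}_g^0, p \boxtimes {\bf X}_h^0) = B(p, {\bf X}_h^0)B({\bf X}_g^0, p) - B(p,p)B({\bf X}_g^0, {\bf X}_h^0) = B({\bf X}_g^0, {\bf X}_h^0),
\end{eqnarray*}
where the first two factors vanish because $p$ lies on both axes. Since $T_p\mathbb{H}^2 = p^\perp$ is positive definite for the future-pointing vector $p$, the restriction of $B$ there is the Riemannian metric, and as ${\bf t}_g, {\bf t}_h$ are unit vectors we get $B({\bf t}_g, {\bf t}_h) = \cos\theta$, where $\theta$ is the angle between the tangent directions ${\bf t}_g$ and ${\bf t}_h$. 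Combining the two displays yields $B({\bf X}_g^0, {\bf X}_h^0) = \cos\theta$.

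The main obstacle I anticipate is not the computation itself but the bookkeeping of signs and orientations. The vector ${\bf X}_g^0$ is pinned down only after fixing the normalizations $(1)$--$(3)$ (in particular the condition $\det({\bf X}_g^0, {\bf X}_g^-, {\bf X}_g^+) > 0$, together with the future-pointing choice of $p$), and each tangent vector ${\bf t}_g = p \boxtimes {\bf X}_g^0$ inherits a definite sign from these conventions. I would therefore verify carefully that the chosen orientations make ${\bf t}_g$ and ${\bf t}_h$ the geometrically correct tangent directions, so that the quantity produced is exactly the $\theta$ of the statement rather than its supplement $\pi - \theta$; this guarantees the equality holds with the stated sign rather than merely up to sign.
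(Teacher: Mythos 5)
Your proof is correct, but it takes a genuinely different route from the paper's. The paper simply normalizes by a conjugation so that the intersection point of the two axes is $(0,0,1)$, observes that then ${\bf X}_g^0 = (1,0,0)$ and ${\bf X}_h^0 = (\cos\theta, \sin\theta, 0)$, and reads off the identity by direct computation; this is shorter but implicitly relies on the claim that rotating an axis by $\theta$ about the intersection point rotates its unit normal ${\bf X}^0$ by the same $\theta$. You instead work coordinate-free in the hyperboloid model: you identify ${\bf X}_g^0$ as the unit $B$-normal of the axis via $\langle {\bf X}_g^-, {\bf X}_g^+\rangle = ({\bf X}_g^0)^\perp$, manufacture unit tangent vectors ${\bf t}_g = p \boxtimes {\bf X}_g^0$ at the intersection point $p$, and deduce $B({\bf t}_g, {\bf t}_h) = B({\bf X}_g^0, {\bf X}_h^0)$ purely from the two defining identities of $\boxtimes$ in Definition \ref{lorentzian vector product} (your applications of properties $(1)$ and $(2)$ all check out, e.g.\ $B({\bf t}_g,{\bf t}_g) = 0 - (-1)\cdot 1 = 1$). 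What your version buys is that it isolates exactly where hyperbolic trigonometry enters (positive-definiteness of $B$ on $p^\perp$ for future-pointing $p$) and needs no choice of coordinates; what the paper's version buys is brevity. Both arguments leave the same residual point to convention --- namely that the specific sign choices in the normalizations $(1)$--$(3)$ make $p \boxtimes {\bf X}^0$ (resp.\ the coordinate form $(\cos\theta,\sin\theta,0)$) the geometrically intended tangent direction rather than its negative, so that one gets $\theta$ and not $\pi - \theta$ --- and you are right to flag this explicitly; the paper glosses over it entirely.
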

\begin{proof}
Under a conjugation, we can consider the intersection as $(0, 0, 1)$.
Then we can regard ${\bf X}_g^0$ as $(1, 0, 0)$, and ${\bf X}_h^0$ as $(\cos{\theta}, \sin{\theta}, 0)$. By a direct calculation, we have $\eqref{angle and product}$.
\end{proof}

\subsubsection{Consistently oriented condition}
We took the special generators $g_i$ of $G_b$ in the section $\ref{hyperbolic geometry}$. They have the following properties:
\begin{eqnarray}
\label{consistently oriented}
&B({\bf X}_{g_m}^0, {\bf X}_{g_n}^0)& < -1, \\
&B({\bf X}_{g_m}^0, {\bf X}_{g_n}^{\pm})& < 0,
\end{eqnarray}
where $m \neq n \in {\mathbb I}$. This condition for the set of generators are called the {\it consistently oriented} condition  in $\cite{cdg2010}$. This idea plays an important role in this paper.
To be convenient, we define the notation:
${\bf X}_i^0 := {\bf X}_{g_i}^0, {\bf X}_i^{\pm} := {\bf X}_{g_i}^{\pm}$ for any $i \in {\mathbb I}$, and ${\bf Y}_j^0 :={\bf X}_{h_j}^0, {\bf Y}_j^{\pm} :={\bf X}_{h_j}^{\pm}$ for any $j \in {\mathbb J}$.

\subsubsection{Margulis invariants}
If a hyperbolic element $\gamma=(g, {\bf u}(g))$ in ${\rm Isom}^0(E_1^2)$ acts freely on $E_1^2$, it has a unique invariant line $C_{\gamma}$ in $E_1^2$. On $C_{\gamma}$, $\gamma$ acts as translation. The distance with respect to $B$ is called the {\it Margulis invariant} $\alpha(g)$. This invariant is defined by $\alpha(g):=B(\gamma (x) - x, {\bf X}_{g}^0)$ for any $x \in E_1^2$. (See $\cite{m1983}$.) Then the value of cocycle is represented as:
\begin{eqnarray}
{\bf u}(g) = \alpha(g) {\bf X}_g^0 + c^- {\bf X}_g^+ + c^+ {\bf X}_g^+,
\end{eqnarray}
where $c^{\pm}$ are some real numbers.
The important properties are:
\begin{lemma}[$\cite{dg2001} \cite{cd2009}$]
The Margulis invariants determine the isometry group in ${\rm Isom}^0(E_1^2)$ up to translation. 
\end{lemma}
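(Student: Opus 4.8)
The plan is to rephrase the statement cohomologically and then reduce it to a finite-rank linear-algebra computation. Fix the linear part $\rho_0$ once and for all, so that an affine deformation is recorded by a cocycle $\mathbf{u} \in Z^1(G_b, \mathbb{R}^2_1)$; conjugating the whole group by a translation $\tau_{\mathbf v}$ replaces $\mathbf{u}(g)$ by $\mathbf{u}(g) + (I-g)\mathbf{v}$, so ``determined up to translation'' means exactly ``determined as a class in $H^1(G_b, \mathbb{R}^2_1)$''. Since $\mathbf{X}_g^0$ is the $g$-fixed vector and $B$ is $g$-invariant, one computes $B((I-g)\mathbf{v}, \mathbf{X}_g^0)=0$, so every coboundary has vanishing Margulis invariant; equivalently $\alpha_{\mathbf u}(g)=B(\mathbf u(g),\mathbf X_g^0)$ depends only on the class $[\mathbf u]$ and is linear in it. Hence the lemma is equivalent to injectivity of the linear map $[\mathbf u]\mapsto(g\mapsto\alpha_{\mathbf u}(g))$ on $H^1(G_b,\mathbb R^2_1)$, i.e. to the implication $\alpha_{\mathbf u}\equiv 0\Rightarrow \mathbf u\in B^1(G_b,\mathbb R^2_1)$.

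First I would record the two ``cheap'' features of $\alpha$ that explain why single elements are insufficient. Using $\mathbf u(g^n)=(I+g+\cdots+g^{n-1})\mathbf u(g)$ with $g\mathbf X_g^0=\mathbf X_g^0$ gives $\alpha_{\mathbf u}(g^n)=n\,\alpha_{\mathbf u}(g)$, and from $\mathbf u(hgh^{-1})=h\mathbf u(g)+(I-hgh^{-1})\mathbf u(h)$ together with $\mathbf X_{hgh^{-1}}^0=h\mathbf X_g^0$ one gets $\alpha_{\mathbf u}(hgh^{-1})=\alpha_{\mathbf u}(g)$. Thus $\alpha$ is a homogeneous class function, and from $g$ alone one recovers only the $\mathbf X_g^0$-component of $\mathbf u(g)$. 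To capture the remaining $\mathbf X_g^{\pm}$-components I would evaluate $\alpha$ on products $g_ig_j$ (and, if needed, short words $g_ig_jg_k$). The consistently oriented condition \eqref{consistently oriented} guarantees that such products are again hyperbolic, so their Margulis invariants are defined, and the cocycle relation $\mathbf u(g_ig_j)=\mathbf u(g_i)+g_i\mathbf u(g_j)$ turns $\alpha_{\mathbf u}(g_ig_j)=B(\mathbf u(g_i),\mathbf X_{g_ig_j}^0)+B(\mathbf u(g_j),g_i^{-1}\mathbf X_{g_ig_j}^0)$ into an explicit linear functional of $(\mathbf u(g_i),\mathbf u(g_j))$ whose coefficient vectors $\mathbf X_{g_ig_j}^0$ and $g_i^{-1}\mathbf X_{g_ig_j}^0$ are governed by the eigenvector relations \eqref{parallel} and \eqref{angle and product}.

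The core of the argument is then a rank count on $Z^1(G_b,\mathbb R^2_1)\cong(\mathbb R^2_1)^{\,b}$ (a cocycle on the free group $G_b$ is free on the generators $g_1,\dots,g_b$), which has dimension $3b$, while $B^1$ has dimension $3$ because the Fuchsian representation has no nonzero invariant vector. I would assemble the functionals coming from $\{g_i\}$ and from a suitable family $\{g_ig_j\}$ into a single linear system and show it has rank exactly $3b-3$ with kernel precisely $B^1$. The point is that, as $j$ varies, the directions $\mathbf X_{g_ig_j}^0$ pair $\mathbf u(g_i)$ against enough independent vectors to pin down all three of its components modulo the single per-generator ambiguity contributed by the global translation; here $b\geq 3$ is what supplies enough distinct companion generators. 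Once $\ker = B^1$ is established, $\alpha_{\mathbf u}\equiv 0$ forces $[\mathbf u]=0$ and the lemma follows.

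The main obstacle is precisely this independence/rank computation: one must verify that the neutral eigenvectors $\mathbf X_{g_ig_j}^0$ (and those of the longer words) are in sufficiently general position relative to the $\mathbf X_{g_i}^0$, so that the Margulis functionals separate every non-coboundary direction. This is exactly where the consistently oriented inequalities \eqref{consistently oriented} and the angle computation of Lemma \ref{angle} do the work, and it is the step most easily gotten wrong by an off-by-a-coboundary miscount. An alternative, if one prefers to avoid the explicit linear algebra, is to invoke the isospectrality theorem of Drumm and Goldman \cite{dg2001} (together with \cite{cd2009}) directly, of which this statement is the relevant case.
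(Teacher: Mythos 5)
The paper does not prove this lemma at all: it is quoted as a known result and attributed to Drumm--Goldman \cite{dg2001} and Charette--Drumm \cite{cd2009}, so there is no in-paper argument to compare against. Judged on its own terms, your reduction is correct and is the standard formulation of the theorem: translation-conjugacy changes $\mathbf{u}(g)$ by the coboundary $(I-g)\mathbf{v}$, coboundaries have vanishing Margulis invariant because $B((I-g)\mathbf{v},\mathbf{X}_g^0)=0$, and the lemma is exactly the injectivity of $[\mathbf{u}]\mapsto\alpha_{\mathbf{u}}$ on ${\rm H}^1(G_b,\mathbb{R}^2_1)$. The class-function and homogeneity observations, and the formula $\alpha_{\mathbf{u}}(g_ig_j)=B(\mathbf{u}(g_i),\mathbf{X}_{g_ig_j}^0)+B(\mathbf{u}(g_j),g_i^{-1}\mathbf{X}_{g_ig_j}^0)$, are all right.

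However, there is a genuine gap: the entire content of the theorem is the step you defer, namely that the Margulis functionals attached to the generators and to enough short words have rank exactly $3b-3$ on ${\rm Z}^1(G_b,\mathbb{R}^2_1)\cong(\mathbb{R}^2_1)^b$ with kernel precisely ${\rm B}^1$. You describe this as a rank count ``governed by'' the eigenvector relations and the consistently oriented inequalities, but you never exhibit the independence, and it is not a routine verification --- establishing it (even for rank two, where one must choose the right four words) is essentially the substance of \cite{dg2001} and \cite{cd2009}. A dimension count showing there are at least $3b-3$ candidate functionals does not show they are independent, and the general-position claim for the vectors $\mathbf{X}_{g_ig_j}^0$ is exactly what must be proved. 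So as written the proposal is a plausible strategy, not a proof; your closing alternative --- invoke the isospectrality theorems of \cite{dg2001} and \cite{cd2009} directly --- is in fact what the paper does, and is the only part of your argument that actually closes the lemma. (Minor point: with the usual normalization $\mathbf{X}_{g^{-1}}^0=-\mathbf{X}_g^0$ one gets $\alpha_{\mathbf{u}}(g^n)=|n|\,\alpha_{\mathbf{u}}(g)$ rather than $n\,\alpha_{\mathbf{u}}(g)$ for negative $n$, though this does not affect your argument.)
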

\begin{lemma}[$\cite{m1983}$]
$\Gamma \subset {\rm Isom}(E_1^2)$ does not act properly discontinuously if there exist two elements $(g, {\bf u}(g)), (h, {\bf u}(h)) \in \Gamma$ such that $\alpha(g) \cdot \alpha(h) < 0$.
\end{lemma}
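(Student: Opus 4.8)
The plan is to argue by contradiction: I assume $\Gamma$ acts properly discontinuously while $\alpha(g)\,\alpha(h) < 0$, and then produce infinitely many distinct group elements that fail to escape a fixed compact set. The geometric input is that, for a hyperbolic $\gamma = (g, {\bf u}(g))$ acting freely, the Margulis invariant $\alpha(g) = B({\bf u}(g), {\bf X}_g^0)$ equals the signed Lorentzian length by which $\gamma$ translates along its unique invariant spacelike line $C_\gamma$ (which is parallel to ${\bf X}_g^0$), and that $\alpha(g^n) = n\,\alpha(g)$. In particular a hyperbolic element with vanishing Margulis invariant fixes a point of $E_1^2$, so its cyclic subgroup has orbits accumulating at that point and cannot act properly discontinuously; the strategy is thus to use the sign mismatch to force the Margulis invariants of suitable products to stay bounded near $0$.

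First I would record the cocycle identity ${\bf u}(\gamma_1 \gamma_2) = {\bf u}(\gamma_1) + g_1\,{\bf u}(\gamma_2)$ and combine it with the normalization $\alpha(\gamma) = B({\bf u}(\gamma), {\bf X}_\gamma^0)$ to control the Margulis invariant of a product $g^m h^n$. The essential dynamical fact is the north--south behaviour of hyperbolic elements of $SO^0(2,1)$ on directions of ${\mathbb R}_1^2$: as $m \to \infty$ the map $g^m$ contracts every direction transverse to ${\bf X}_g^-$ toward ${\bf X}_g^+$, and similarly for $h^n$. Consequently the neutral eigenvector ${\bf X}_{g^m h^n}^0$ converges as $m, n \to \infty$, and expanding $B({\bf u}(g^m h^n), {\bf X}_{g^m h^n}^0)$ through the cocycle relation yields an estimate of the form
\[
\alpha(g^m h^n) = m\,\alpha(g) + n\,\alpha(h) + O(1),
\]
with the bounded error uniform once $m, n$ are large. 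This additivity-up-to-bounded-error of $\alpha$ along the two cyclic directions is the crux of the argument.

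Granting the estimate, I would exploit $\alpha(g)\,\alpha(h) < 0$, say $\alpha(g) > 0 > \alpha(h)$. Choosing $m_k, n_k \to \infty$ with $m_k / n_k \to -\alpha(h)/\alpha(g)$, the leading term $m_k\,\alpha(g) + n_k\,\alpha(h)$ stays bounded, so $\gamma_k = g^{m_k} h^{n_k}$ form an infinite family of distinct elements with uniformly bounded Margulis invariants. Since $\alpha(\gamma_k)$ is the signed translation length along $C_{\gamma_k}$, and since the lines $C_{\gamma_k}$ and neutral eigenvectors ${\bf X}_{\gamma_k}^0$ also remain in a bounded part of $E_1^2$ (again by convergence of the linear dynamics), I may pick points $x_k \in C_{\gamma_k}$ in a fixed compact set $K$ whose images $\gamma_k x_k \in C_{\gamma_k}$ stay within bounded distance, so $\gamma_k K \cap K \neq \emptyset$ for all $k$. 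This contradicts proper discontinuity and proves the lemma.

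The step I expect to be the main obstacle is the uniform bound on the error term $O(1)$, equivalently the precise control of how ${\bf X}_{g^m h^n}^0$ and the ${\bf X}_{\gamma_k}^0$-component of ${\bf u}(g^m h^n)$ behave as the exponents grow. This requires quantitative north--south estimates for $SO^0(2,1)$ acting on ${\mathbb R}_1^2$, with contraction rates governed by $\lambda_g, \lambda_h$, together with the consistently oriented condition $B({\bf X}_g^0, {\bf X}_h^0) < -1$, which guarantees that the attracting and repelling data of $g$ and $h$ are genuinely transverse so that $g^m h^n$ stays hyperbolic with controlled eigenvectors. An alternative that avoids constructing the whole family is to apply an intermediate-value argument to $\alpha(g^m h^n)$ as the exponents vary, producing a single element whose Margulis invariant is (near) $0$ and hence a (near) fixed point, again defeating proper discontinuity.
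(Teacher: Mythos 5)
The paper does not actually prove this statement: it is quoted from Margulis \cite{m1983} and used as a black box, so there is no internal proof to compare against. Your outline is, in substance, Margulis's original strategy (opposite signs force words $g^{m}h^{n}$ with bounded Margulis invariant, hence points moved a bounded amount, contradicting properness), and the overall plan is the right one. However, the two steps you defer are not refinements of the argument --- they \emph{are} the argument. The quasi-additivity $\alpha(g^{m}h^{n}) = m\,\alpha(g) + n\,\alpha(h) + O(1)$ with a uniform error is precisely the hard estimate of Margulis's paper, and it rests on quantitative transversality of the fixed-point data of $g$ and $h$. You cannot borrow the consistently oriented condition $B({\bf X}_g^0, {\bf X}_h^0) < -1$ for this: that inequality is a property of the particular generators $g_i$ chosen in \S 2 of the paper, not a hypothesis of the lemma, which concerns two arbitrary hyperbolic elements of an arbitrary $\Gamma$ (one must either prove the estimate for a general transverse pair or first reduce to that case, e.g.\ by replacing $g,h$ with suitable conjugates or powers, noting that $\alpha(g^{-1})=\alpha(g)$ so inversion cannot fix a sign problem).

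The second, less clearly flagged gap is that an infinite family of distinct $\gamma_k$ with $|\alpha(\gamma_k)|$ bounded does not by itself violate proper discontinuity. A point $x \in C_{\gamma_k}$ is displaced by the vector $\alpha(\gamma_k)\,{\bf X}_{\gamma_k}^0$, so the contradiction requires the affine lines $C_{\gamma_k}$ themselves to meet one fixed compact set. Convergence of the linear dynamics controls the directions ${\bf X}_{\gamma_k}^{0,\pm}$ but says nothing about where in $E_1^2$ the axis sits; locating $C_{g^m h^n}$ means solving $(g^m h^n - I)x = \alpha\,{\bf X}^0 - {\bf u}(g^m h^n)$ on the complement of the neutral direction, and ${\bf u}(g^m h^n)$ grows with $m+n$, so a genuine cancellation estimate is needed. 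This is the second half of Margulis's proof and is absent here. The same objection applies to your alternative ending: an element with $\alpha$ near $0$ yields an almost-fixed point in a controlled region only once you know its axis passes near a prescribed point. So the proposal is a correct plan, but as written it is a restatement of what must be proved rather than a proof.
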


\section{Affine deformations}
\label{affine deformations}
In this section, we will consider affine deformation groups $\Gamma_b$ of $G_b$.

\subsection{Affine deformations of a sphere with holes}
The homomorphism $\rho:G_b \hookrightarrow {\rm Isom}^0(E_1^2)$ is an {\it affine deformation} if it satisfies the relation $L \circ \rho = id$, where $L:{\rm Isom}^0(E_1^2) \to SO^0(2,1)$ is a projection. Then, for any element $g \in G_b$, we can denote the {\it affine deformation} $\rho(g) = (g, {\bf u}(g))$. The map ${\bf u}:G_b \to {\mathbb R}_1^2$ is called a {\it cocycle}. It satisfies a {\it cocycle condition}: ${\bf u}(gh) = g{\bf u}(h) + {\bf u}(g)$, $(g, h \in G_b)$. A classification of the affine deformations is equivalent to a  classification of cocycles. 
A {\it coboundary} $\delta_{{\bf v}}$ is a cocycle which forms $\delta_{{\bf v}} (g) = {\bf v} - g {\bf v} \in \langle {\bf v} \rangle^{\perp}$ for a vector ${\bf v} \in {\mathbb R}_1^2$ $(g \in G_b)$.
The coboundary $\delta_{{\bf v}}$ corresponds to the translation by ${\bf v}$. We consider the cocycles up to translation, therefore we consider the quotient space ${\rm H}^1 (G_b, {\mathbb R}_1^2)$.
\begin{remark}
\label{dimension}
The linear space ${\rm H}^1 (G_b, {\mathbb R}_1^2)$ is of $(3b-3)$-dimension.
\end{remark}
To be convenient, We denote $\gamma_i:=\rho(g_i)$ and $\eta_j:=\rho(h_j)$. Because $\rho$ is homomorphic, $\eta_j := \gamma_{j+1}^{-1} \cdot \gamma_{j}^{-1} \cdots \gamma_1^{-1}$ holds.
In general, $\alpha(h_j)$ is not always positive even if all $\alpha(g_i)$ are positive. See $\cite{cdg2010} \cite{c2003} \cite{c2006} \cite{c2009}$.

On the pair of pants, we prove the following lemmas.
\begin{lemma}
\label{cocycle on pant}
For every hyperbolic pair of pants $P=\langle f_1, f_2, f_3 \mid f_1 \cdot f_2 \cdot f_3 = id \rangle$ with no cusp, we consider its affine deformations. Set the coefficients ${\bf u}(f_i)=\alpha_i {\bf X}_i^0 + c_i^- {\bf X}_i^- + c_i^+ {\bf X}_i^+ \, \, (i = 1, 2, 3)$. Then a map 
$$
{\mathbb R}^6 \ni (\alpha_1, \alpha_2, \alpha_3, c_1^-, c_1^+, c_2^-) \mapsto (c_2^+, c_3^-, c_3^+) \in {\mathbb R}^3
$$
is a surjective linear map. Furthermore, a map 
$$
{\mathbb R}^6 \ni (\alpha_1, \alpha_2, \alpha_3, c_1^-, c_1^+, c_2^-) \mapsto {\bf u} \in {\rm Z}^1 (G_b, {\mathbb R}_1^2)
$$ is also a linear map.
\end{lemma}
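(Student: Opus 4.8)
The plan is to exploit that $P$ is free of rank two on $\{f_1, f_2\}$, since the relation $f_1 \cdot f_2 \cdot f_3 = id$ makes $f_3 = (f_1 f_2)^{-1}$ redundant. Then the whole lemma reduces to a single statement: that $(\alpha_1, \alpha_2, \alpha_3, c_1^-, c_1^+, c_2^-)$ is a \emph{second} linear coordinate system on the $6$-dimensional space ${\rm Z}^1(P, {\mathbb R}_1^2)$, alongside the obvious one. First I would fix the obvious coordinates: because $f_i$ is hyperbolic with three distinct eigenvalues, $\{{\bf X}_i^0, {\bf X}_i^-, {\bf X}_i^+\}$ is a basis of ${\mathbb R}_1^2$, so the assignment ${\bf u} \mapsto ({\bf u}(f_1), {\bf u}(f_2))$ identifies ${\rm Z}^1(P, {\mathbb R}_1^2)$ with ${\mathbb R}^6$ through $(\alpha_1, c_1^-, c_1^+, \alpha_2, c_2^-, c_2^+)$; a cocycle is freely and uniquely determined by its values on the two free generators.

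Next I would make the three dependent coefficients explicit. Applying the cocycle condition to the relator gives ${\bf u}(f_1) + f_1 {\bf u}(f_2) + f_1 f_2\, {\bf u}(f_3) = 0$, and since $f_1 f_2 = f_3^{-1}$ this yields
\begin{eqnarray*}
{\bf u}(f_3) = -f_3\, {\bf u}(f_1) - f_3 f_1\, {\bf u}(f_2).
\end{eqnarray*}
Pairing with ${\bf X}_3^0$, and using $f_3 {\bf X}_3^0 = {\bf X}_3^0$ together with the $SO^0(2,1)$-invariance of $B$ and ${\bf X}_3^\pm \perp {\bf X}_3^0$, I obtain the closed form
\begin{eqnarray*}
\alpha_3 = -B({\bf u}(f_1), {\bf X}_3^0) - B(f_1 {\bf u}(f_2), {\bf X}_3^0),
\end{eqnarray*}
and similarly $c_3^-, c_3^+$ by pairing ${\bf u}(f_3)$ with the null vectors ${\bf X}_3^\mp$. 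Hence all nine numbers $\{\alpha_i, c_i^\pm\}$ are explicit linear functionals of the six natural coordinates.

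The hard part will be the change of coordinates. The transition map $\Phi$ fixes $\alpha_1, \alpha_2, c_1^-, c_1^+, c_2^-$ and replaces $c_2^+$ by $\alpha_3$, so $\Phi$ is a linear isomorphism exactly when the $c_2^+$-coefficient of $\alpha_3$, namely $-B(f_1 {\bf X}_2^+, {\bf X}_3^0)$, is nonzero. I expect this single nonvanishing to be the crux of the lemma. My first attempt would be to deduce it from the consistently oriented configuration of the three axes of a hyperbolic pair of pants (the inequalities $B({\bf X}_m^0, {\bf X}_n^0) < -1$ and $B({\bf X}_m^0, {\bf X}_n^{\pm}) < 0$); failing a clean conceptual argument, I would conjugate $f_1$ into a standard diagonal frame, as in the normalization used in the proof of Lemma \ref{angle}, and verify the nonvanishing by direct computation.

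Granting this, $(\alpha_1, \alpha_2, \alpha_3, c_1^-, c_1^+, c_2^-)$ is a genuine linear coordinate system on ${\rm Z}^1(P, {\mathbb R}_1^2)$: inverting $\Phi$ expresses ${\bf u}$, and in particular $c_2^+, c_3^-, c_3^+$, as linear functions of these six inputs, which gives both the second assertion and the linearity in the first. For surjectivity onto ${\mathbb R}^3$ it suffices, after composing with the isomorphism $\Phi^{-1}$, to check that $c_2^+, c_3^-, c_3^+$ are independent functionals in the natural coordinates, and this needs nothing beyond invertibility of $f_3$: restricting to the slice ${\bf u}(f_2) = 0$, the map ${\bf u}(f_1) \mapsto {\bf u}(f_3) = -f_3\, {\bf u}(f_1)$ is an isomorphism, so $c_3^-, c_3^+$ already span a plane there while $c_2^+ \equiv 0$, and $c_2^+$ is itself a free coordinate; a linear dependence therefore forces all three coefficients to vanish, proving the map is onto.
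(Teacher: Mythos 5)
Your proposal is correct in substance but takes a genuinely different route from the paper. The paper pairs the relator identity $f_3^{-1}{\bf u}(f_3)+f_1{\bf u}(f_2)+{\bf u}(f_1)=0$ with the three neutral eigenvectors ${\bf X}_1^0,{\bf X}_2^0,{\bf X}_3^0$, organizes the three resulting scalar equations as $C\,(c_2^+,c_3^-,c_3^+)^{T}=-A\,(\alpha_1,\alpha_2,\alpha_3)^{T}-B\,(c_1^-,c_1^+,c_2^-)^{T}$ for explicit $3\times 3$ matrices, and verifies $\det A,\det B,\det C\neq 0$ via the Lorentzian cross-product identities; invertibility of $C$ gives well-definedness and linearity, and invertibility of $A$ (or $B$) gives surjectivity. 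You instead coordinatize ${\rm Z}^1(P,{\mathbb R}_1^2)$ by the values on the free generators $f_1,f_2$ and observe that the passage to $(\alpha_1,\alpha_2,\alpha_3,c_1^-,c_1^+,c_2^-)$ is triangular except in one slot, so the entire lemma reduces to the single scalar nonvanishing $B(f_1{\bf X}_2^+,{\bf X}_3^0)\neq 0$; your surjectivity argument on the slice ${\bf u}(f_2)=0$ is likewise independent of any determinant computation. Your route buys economy and transparency (one scalar instead of three determinants, and a clear statement that the lemma is a change of linear coordinates on a $6$-dimensional space); the paper's route buys explicit inversion formulas that are implicitly reused in the inductive construction of $L_0$ in \S 4. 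A further point in your favor: you correctly retain the translate $f_1{\bf X}_2^{\pm}$ coming from the cocycle condition, whereas the paper's displayed equations $(2)$ and $(3)$ appear to replace $f_1{\bf X}_2^{\pm}$ by ${\bf X}_2^{\pm}$, which is only justified when pairing against a vector fixed by $f_1$.

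The one step you defer, $B(f_1{\bf X}_2^+,{\bf X}_3^0)\neq 0$, is true and closes in two lines, so it is not a real gap. The vector $f_1{\bf X}_2^+$ is null and spans the expanding eigendirection of $f_1f_2f_1^{-1}$. For a nonzero null vector $v$ one has $B(v,{\bf X}_3^0)=0$ if and only if $v\in({\bf X}_3^0)^{\perp}=\langle{\bf X}_3^-,{\bf X}_3^+\rangle$, and the only null directions in that plane are ${\bf X}_3^{\pm}$. Hence vanishing would force the axis of $f_1f_2f_1^{-1}$ (which is $f_1$ applied to the axis of $f_2$) to share an endpoint on $\partial{\mathbb H}^2$ with the axis of $f_3$; for a discrete free group of rank two these are distinct axes of non-commuting hyperbolic elements and therefore share no endpoint. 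Your fallback of conjugating into a standard diagonal frame would of course also verify this directly.
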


\begin{proof}
The cocycle condition says that ${\bf u}(f_1 \cdot f_2 \cdot f_3) = 0$; namely,
\begin{eqnarray}
f_3^{-1} {\bf u}(f_3) + f_1 {\bf u}(f_2) + {\bf u}(f_1) = 0.
\end{eqnarray}
It implies that
\begin{eqnarray*}
\alpha_3 {\bf X}_3^0 + \lambda_3^{-1} c_3^- {\bf X}_3^- + \lambda_3 c_3^+ {\bf X}_3^+
+ \alpha_2 f_1 {\bf X}_2^0 + c_2^- f_1 {\bf X}_2^- + c_2^+ f_1 {\bf X}_2^+ \\
+ \alpha_1 {\bf X}_1^0 + c_1^- {\bf X}_1^- + c_1^+ {\bf X}_1^+ = 0 .
\end{eqnarray*}
By considering the Lorentzian inner product with ${\bf X}_1^0, {\bf X}_2^0$ and ${\bf X}_3^0$, we have
\begin{eqnarray*}
&(1)& \alpha_1 + \alpha_2 B({\bf X}_1^0, {\bf X}_2^0) + \alpha_3 B({\bf X}_1^0, {\bf X}_3^0) = \\
&&-\lambda_3^{-1} c_3^- B({\bf X}_1^0, {\bf X}_3^-) - \lambda_3 c_3^+ B({\bf X}_1^0, {\bf X}_3^+) - c_2^- B({\bf X}_1^0,  {\bf X}_2^-) - c_2^+ B({\bf X}_1^0, {\bf X}_2^+) \\
&(2)& \alpha_1 B({\bf X}_2^0, {\bf X}_1^0) + \alpha_2 + \alpha_3 B({\bf X}_2^0, {\bf X}_3^0) = \\
&& -\lambda_3^{-1} c_3^- B({\bf X}_2^0, {\bf X}_3^-) - \lambda_3 c_3^+ B({\bf X}_2^0, {\bf X}_3^+) - c_1^- B({\bf X}_2^0, {\bf X}_1^-) - c_1^+ B({\bf X}_2^0, {\bf X}_1^+) \\
&(3)& \alpha_1 B({\bf X}_3^0, {\bf X}_1^0) + \alpha_2  B({\bf X}_3^0, {\bf X}_2^0) + \alpha_3 = \\
&&- c_2^- B({\bf X}_3^0, {\bf X}_2^-) - c_2^+ B({\bf X}_3^0, {\bf X}_2^+) - c_1^- B({\bf X}_3^0, {\bf X}_1^-) - c_1^+ B({\bf X}_3^0, {\bf X}_1^+).
\end{eqnarray*}
We define three matrices by:
\begin{eqnarray*}
A &:=&
\left[
\begin{array}{ccc}
1 & B({\bf X}_1^0, {\bf X}_2^0) & B({\bf X}_1^0, {\bf X}_3^0) \\
B({\bf X}_1^0, {\bf X}_2^0) & 1 & B({\bf X}_2^0, {\bf X}_3^0) \\
B({\bf X}_1^0, {\bf X}_3^0) & B({\bf X}_2^0, {\bf X}_3^0) & 1
\end{array}
\right], \\
B &:=&
\left[
\begin{array}{ccc}
0 & 0 & B({\bf X}_1^0, {\bf X}_2^-) \\
B({\bf X}_2^0, {\bf X}_1^-) & B({\bf X}_2^0, {\bf X}_1^+) & 0 \\
B({\bf X}_3^0, {\bf X}_1^-) & B({\bf X}_3^0, {\bf X}_1^+) & B({\bf X}_3^0, {\bf X}_2^-)
\end{array}
\right], \\
C &:=&
\left[
\begin{array}{ccc}
B({\bf X}_1^0, {\bf X}_2^+) & \lambda_3^{-1} B({\bf X}_1^0, {\bf X}_3^-) & \lambda_3 B({\bf X}_1^0, {\bf X}_3^+) \\
0 & \lambda_3^{-1} B({\bf X}_2^0, {\bf X}_3^-) & \lambda_3 B({\bf X}_2^0, {\bf X}_3^+) \\
B({\bf X}_3^0, {\bf X}_2^+) & 0 & 0
\end{array}
\right] .
\end{eqnarray*}
Then the equations $(1), (2)$ and $(3)$ are equivalent to
\begin{eqnarray}
C \left[
\begin{array}{c}
c_2^+ \\
c_3^- \\
c_3^+
\end{array}
\right]
=
-A \left[
\begin{array}{c}
\alpha_1 \\
\alpha_2 \\
\alpha_3
\end{array}
\right]
-B \left[
\begin{array}{c}
c_1^- \\
c_1^+ \\
c_2^-
\end{array}
\right] .
\end{eqnarray}
To complete the proof of the lemma, we have only to check that $A, B, C$ are regular matrices.
Indeed, their determinants are not zero;

Notice that $\det{A} = 1 + 2 B({\bf X}_1^0, {\bf X}_2^0) B({\bf X}_2^0, {\bf X}_3^0) B({\bf X}_3^0, {\bf X}_1^0) - B({\bf X}_1^0, {\bf X}_2^0)^2 - B({\bf X}_2^0, {\bf X}_3^0)^2 - B({\bf X}_3^0, {\bf X}_1^0)^2.$
Recall that $B({\bf X}_i^0, {\bf X}_j^0) < -1 \, \, (i \neq j)$. Therefore $\det{A}$ is negative. 

For $B$, we calculate the determinant as follows:
\begin{eqnarray*}
\det{B} &=& B({\bf X}_1^0, {\bf X}_2^-) \{ B({\bf X}_2^0, {\bf X}_1^-) B({\bf X}_3^0, {\bf X}_1^+) - B({\bf X}_2^0, {\bf X}_1^+) B({\bf X}_3^0, {\bf X}_1^-) \} \\
&=& -B({\bf X}_1^0, {\bf X}_2^-) B({\bf X}_2^0 \boxtimes {\bf X}_3^0, {\bf X}_1^- \boxtimes {\bf X}_1^+) \\
&=& B({\bf X}_1^0, {\bf X}_2^-) B({\bf X}_1^-, {\bf X}_1^+) B({\bf X}_2^0 \boxtimes {\bf X}_3^0, {\bf X}_1^0) \\
&=& B({\bf X}_1^0, {\bf X}_2^-) B({\bf X}_1^-, {\bf X}_1^+) \det{({\bf X}_1^0 , {\bf X}_2^0, {\bf X}_3^0)} \neq 0
\end{eqnarray*}
For C, the calculation is similar.
\end{proof}

\begin{lemma}
\label{each pants}
Let ${\bf u}'$ be a map $G_b \to {\mathbb R}_1^2$. If ${\bf u}'$ is a cocycle on each $P_j$, then ${\bf u}$ is a cocycle on $G_b$.
\end{lemma}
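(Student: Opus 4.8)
The plan is to exploit the fact that $G_b$ is generated by the pants subgroups $P_1, \dots, P_{b-1}$ and that, modulo the relations internal to the $P_j$, its only defining relation is the single word $g_1 \cdots g_{b+1} = id$. Recall that ${\bf u}'$ is a cocycle on $G_b$ precisely when $\rho'(g) := (g, {\bf u}'(g))$ is a homomorphism $G_b \to {\rm Isom}^0(E_1^2)$. Since each $g_i$ and each $h_j$ lies in some $P_j$ on which ${\bf u}'$ is already a cocycle, the map $\rho'$ is multiplicative on every pair of elements sharing a common pants subgroup; the whole problem therefore reduces to checking that $\rho'$ respects the one global relation, i.e. that $\rho'(g_1)\rho'(g_2)\cdots\rho'(g_{b+1}) = id$ in the semidirect product. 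Writing out this product in ${\rm Isom}^0(E_1^2)$, I must show
\[
\sum_{i=1}^{b+1} (g_1 \cdots g_{i-1})\, {\bf u}'(g_i) = 0 .
\]

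The heart of the argument is a telescoping computation built on the identities $g_1 \cdots g_{j+1} = h_j^{-1}$ (immediate from $h_j = g_{j+1}^{-1}\cdots g_1^{-1}$) together with the pants relations $g_1 g_2 h_1 = id$, $h_{j-1}^{-1} g_{j+1} h_j = id$ and $h_{b-2}^{-1} g_b g_{b+1} = id$. I would prove by induction on $j$ that
\[
{\bf u}'\big(g_1 \cdots g_{j+1}\big) = \sum_{i=1}^{j+1} (g_1 \cdots g_{i-1})\, {\bf u}'(g_i),
\]
the point being that at each stage the elements involved, namely $h_{j-1}^{-1}$ and $g_{j+1}$ (or $g_1, g_2$ at the base, and $h_{b-2}^{-1}, g_b, g_{b+1}$ at the top), all lie in a single pants subgroup, so the cocycle identities for ${\bf u}'$ (including ${\bf u}'(xy) = {\bf u}'(x) + x\,{\bf u}'(y)$ and ${\bf u}'(x^{-1}) = -x^{-1}{\bf u}'(x)$) may be applied there. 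The inductive step rewrites $g_1 \cdots g_{j+1} = h_{j-1}^{-1} g_{j+1}$ and expands inside $P_j$, using $h_{j-1}^{-1} = g_1 \cdots g_j$ and the inductive hypothesis; the final step, carried out inside $P_{b-1}$, yields ${\bf u}'(g_1\cdots g_{b+1}) = {\bf u}'(id) = 0$, which is exactly the desired relation.

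Once this vanishing is established, the values ${\bf u}'(g_1), \dots, {\bf u}'(g_{b+1})$ satisfy the unique relator condition for the presentation of $G_b$, so $\rho'$ descends from the free group on the $g_i$ to a genuine homomorphism on $G_b$; equivalently, the data define a cocycle on $G_b$ restricting on each $P_j$ to the cocycle we started with. Conceptually this is just the statement that $G_b = P_1 *_{\langle h_1\rangle} \cdots *_{\langle h_{b-2}\rangle} P_{b-1}$ is an iterated amalgamation along the curves $h_j$, and that a family of cocycles on the vertex groups agreeing on the amalgamated cyclic subgroups glues to a global cocycle. I expect the main obstacle to be bookkeeping the telescoping correctly: one must check that the value ${\bf u}'(h_j)$ forced by the relation in $P_j$ agrees with its value as a free generator of $P_{j+1}$, since it is precisely this interlocking of adjacent pants relations through the shared curve $h_j$ that collapses the chain of pants relations into the single global relation $g_1 \cdots g_{b+1} = id$.
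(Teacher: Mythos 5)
Your proposal is correct and follows essentially the same route as the paper: an induction along the chain of pants, using the relation $h_j^{-1} = h_{j-1}^{-1}g_{j+1}$ inside $P_j$ to telescope the cocycle condition from $P_1$ up through $P_{b-1}$. The only cosmetic difference is that you phrase the goal as verifying the single relator $g_1\cdots g_{b+1}=id$ in the $(b+1)$-generator presentation, whereas the paper treats $G_b$ as free on $g_1,\dots,g_b$ and checks that the prescribed values of ${\bf u}'$ on $h_j$ and $g_{b+1}$ agree with those forced by the free cocycle extension; the two formulations are equivalent and the computation is identical.
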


\begin{proof}
Since $G_b$ is generated by $g_1, \ldots, g_b$, we must show that the remaining elements are represented as the forms of the cocycle condition of the generators. Note that we have only to show it on $h_j (j \in {\mathbb J})$ and $g_{b+1}$. First consider $h_j$ by an induction.
For $j=1$, ${\bf u}'(h_1) = {\bf u}'(g_2^{-1} g_1^{-1})$ holds. Since ${\bf u}'$ is the cocycle on $P_1$, ${\bf u}'(g_2^{-1} g_1^{-1}) = -g_2^{-1} g_1^{-1} {\bf u}'(g_1) - g_2^{-1} {\bf u}'(g_2)$. We have done when $j=1$.
Second, for $j \geq 2$, we have only to show that 
\begin{eqnarray}
\label{cocycle condition about hj}
{\bf u}'(h_j) = - \sum_{x=1}^{j+1} (g_{j+1}^{-1} \cdots g_x^{-1} {\bf u}'(g_x)) .
\end{eqnarray}
Since ${\bf u}'(h_j) = {\bf u}'(g_{j+1}^{-1} h_{j-1})$, on $P_j$, we obtain ${\bf u}'(h_j)= g_{j+1}^{-1} {\bf u}'(h_{j-1}) - g_{j+1}^{-1} {\bf u}'(g_{j+1})$. By an assumption of the induction, we have the equation $\eqref{cocycle condition about hj}$ for all $h_j$. The case of $g_{b+1}$ is proved in the same way as $h_j$.
\end{proof}


\section{Linear space $D_b$}
\label{Linear space of affine deformations}
To be convenient, we represent $\alpha_i := \alpha(g_i)$ and $\beta_j := \alpha(h_j)$. 
We determine two linear spaces by
\begin{eqnarray}
D_b &:=& \{ ({\bf \alpha}, {\bf \beta}, {\bf t}) \in {\mathbb R}^{3b-3} \, \mid {\bf \alpha} \in {\mathbb R}^{\mathbb I}, {\bf \beta}, {\bf t} \in {\mathbb R}^{\rm J} \}, \\
D_b^0 &:=& \{ ({\bf \alpha}, {\bf \beta}, {\bf 0}) \in D_b \}.
\end{eqnarray}
Here we set ${\bf \alpha} = (\alpha_1, \ldots, \alpha_{b+1})$, ${\bf \beta} = (\beta_1, \ldots, \beta_{b-2})$ and ${\bf t} = (t_1, \ldots, t_{b-2})$.
We call the parameter $t_k$ the {\it affine twist parameter} along $h_k$. It indicates an analogy of hyperbolic geometry, which will be considered geometrically later.

\paragraph{Proof of Theorem \ref{Linear isomorphism}.}
We will construct a linear map $\widetilde{\Phi}: D_b \to {\rm H}^1(G_b, {\mathbb R}_1^2)$, and then we will show that it is a linear isomorphism.
At first, we only consider $D_b^0$, and construct an injective linear map $\widetilde{L}_0:D_b^0 \to {\rm H}^1(G_b, {\mathbb R}_1^2) $;
For any $({\bf \alpha}, {\bf \beta}, {\bf 0}) \in D_b^0$,  we define a cocycle ${\bf u}_0^{\alpha, \beta}$ by the following four steps:
\newline

$(1)$ We define the values of ${\bf u}_0^{\alpha, \beta}$ on the generators of $P_1$. We assign $\alpha_1, \alpha_2, \beta_1$ for their Margulis invariants.
Because of the translation equivalence of cocycles, we can designate the three parameters except the Margulis invariants arbitrarily. Thus we can set, for instance, $c_1^{\pm} = c_2^- =0$. Namely,
\begin{eqnarray*}
{\bf u}_0^{\alpha, \beta}(g_1) &=& \alpha_1 {\bf X}_1^0 + 0 {\bf X}_1^- + 0 {\bf X}_1^+, \\
{\bf u}_0^{\alpha, \beta}(g_2) &=& \alpha_2 {\bf X}_2^0 + 0 {\bf X}_2^- + c_2^+ {\bf X}_2^+, \\
{\bf u}_0^{\alpha, \beta}(h_1) &=& \beta_1 {\bf Y}_1^0 + d_1^- {\bf Y}_1^- + d_1^+ {\bf Y}_1^+.
\end{eqnarray*}
The remaining values $c_2^+, d_1^{\pm}$ are uniquely determined by $\alpha_1, \alpha_2, \beta_1$. Then this correspondence is linear with respect to $\alpha_1, \alpha_2, \beta_1$ from Lemma $\ref{cocycle on pant}$.
\newline

$(2)$ We define the values of ${\bf u}_0^{\alpha, \beta}$ on the generators of $P_2$. $\alpha_3, \beta_2$ are assigned as Margulis invariants.
Since the parameters of ${\bf u}_0^{\alpha, \beta} (h_1)$ are already determined, we must decide one parameter. Hence $c_3^{-}=0$. Namely,
\begin{eqnarray*}
{\bf u}_0^{\alpha, \beta}(h_1^{-1}) &=& -h_1^{-1} (\beta_1 {\bf Y}_1^0 + d_1^- {\bf Y}_1^- + d_1^+ {\bf Y}_1^+), \\
{\bf u}_0^{\alpha, \beta}(g_3) &=& \alpha_3 {\bf X}_3^0 + 0 {\bf X}_3^- + c_3^+ {\bf X}_3^+, \\
{\bf u}_0^{\alpha, \beta}(h_2) &=& \beta_2 {\bf Y}_2^0 + d_2^- {\bf Y}_2^- + d_2^+ {\bf Y}_2^+.
\end{eqnarray*}
The remaining values $c_3^+, d_2^{\pm}$ are unique determined by $d_1^{\pm}, \beta_1, \alpha_3, \beta_2$. Therefore they only depend on $\alpha_1, \alpha_2, \beta_1, \alpha_3, \beta_2$. Then this correspondence is also linear by Lemma $\ref{cocycle on pant}$.
\newline

$(3)$ For $j \in \{ 2 \leq j \leq b-2 \}$, we define ${\bf u}_0^{\alpha, \beta}|_{P_j}$ inductively by the same construction with $(2)$.
\newline

$(4)$ 
We define the values of ${\bf u}_0^{\alpha, \beta}$ on $P_{b-1}$. We decide all parameters by the above-mentioned way. Hence
\begin{eqnarray*}
{\bf u}_0^{\alpha, \beta}(h_{b-2}^{-1}) &=& -h_{b-2}^{-1} (\beta_{b-2} {\bf Y}_{b-2}^0 + d_{b-2}^- {\bf Y}_{b-2}^- + d_{b-2}^+ {\bf Y}_{b-2}^+), \\
{\bf u}_0^{\alpha, \beta}(g_{b}) &=& \alpha_b {\bf X}_b^0 + 0 {\bf X}_b^- + c_b^+ {\bf X}_b^+, \\
{\bf u}_0^{\alpha, \beta}(g_{b+1}) &=& \beta_{b+1} {\bf X}_{b+1}^0 + d_{b+1}^- {\bf X}_{b+1}^- + d_{b+1}^+ {\bf X}_{b+1}^+.
\end{eqnarray*}
The remaining values $c_b^+, c_{b+1}^{\pm}$ are uniquely determined by $\alpha_i, \beta_j$ for $i \in {\mathbb I}, j \in {\mathbb J}$. Then this correspondence is also linear by Lemma $\ref{cocycle on pant}$.
\newline

From the construction, the Lemma $\ref{cocycle on pant}$ and  $\ref{each pants}$, we obtain;
\begin{lemma}
Let the above-mentioned map denoted by $L_0$. Then 
\begin{eqnarray}
\widetilde{L}_0 : D_b^0 \ni ({\bf \alpha}, {\bf \beta}, {\bf 0}) \mapsto [L_0(({\bf \alpha}, {\bf \beta}, {\bf 0}))] \in {\rm H}^1(G_b, {\mathbb R}_1^2)
\end{eqnarray}
is an injective linear map.
\end{lemma}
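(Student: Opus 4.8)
The plan is to check three things in turn: that $L_0$ genuinely produces cocycles, that $\widetilde{L}_0$ is linear, and that it is injective; only the last carries real content. For the first two, observe that at each of the four construction steps Lemma~\ref{cocycle on pant} determines the coefficients left free by a \emph{linear} rule in the data already fixed, while the slots we are free to normalize are set to $0$. Hence each coefficient of ${\bf u}_0^{\alpha, \beta}$ is a linear function of $({\bf \alpha}, {\bf \beta})$, and ${\bf u}_0^{\alpha, \beta}$ restricts to a cocycle on every pair of pants $P_j$. By Lemma~\ref{each pants} such a map is a cocycle on all of $G_b$, so $L_0({\bf \alpha}, {\bf \beta}, {\bf 0}) \in {\rm Z}^1(G_b, {\mathbb R}_1^2)$; composing the linear map $L_0$ with the linear quotient ${\rm Z}^1 \to {\rm H}^1$ shows that $\widetilde{L}_0$ is linear.

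Injectivity will follow from the fact that the Margulis invariant descends to cohomology. For any cocycle ${\bf u}$ and hyperbolic $g$, since ${\bf X}_g^{\pm} \in ({\bf X}_g^0)^{\perp}$ and $B({\bf X}_g^0, {\bf X}_g^0) = 1$, the Margulis invariant equals the ${\bf X}_g^0$-component, $\alpha(g) = B({\bf u}(g), {\bf X}_g^0)$: the linear part of $\gamma(x) - x$ drops out of $B(\,\cdot\,, {\bf X}_g^0)$ because $g$ is a $B$-isometry fixing ${\bf X}_g^0$. For a coboundary $\delta_{\bf v}(g) = {\bf v} - g{\bf v}$ the same isometry property gives $B(\delta_{\bf v}(g), {\bf X}_g^0) = B({\bf v}, {\bf X}_g^0) - B(g{\bf v}, g{\bf X}_g^0) = 0$. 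Thus $g \mapsto \alpha(g)$ is a well-defined function on ${\rm H}^1(G_b, {\mathbb R}_1^2)$.

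Now suppose $\widetilde{L}_0({\bf \alpha}, {\bf \beta}, {\bf 0}) = 0$, i.e. $L_0({\bf \alpha}, {\bf \beta}, {\bf 0})$ is a coboundary. By construction the Margulis invariants of ${\bf u}_0^{\alpha, \beta}$ on the generators are exactly $\alpha(g_i) = \alpha_i$ and $\alpha(h_j) = \beta_j$. Since these invariants vanish on every coboundary, the vanishing of the class forces $\alpha_i = 0$ for all $i \in {\mathbb I}$ and $\beta_j = 0$ for all $j \in {\mathbb J}$, that is $({\bf \alpha}, {\bf \beta}, {\bf 0}) = {\bf 0}$. Equivalently, reading off Margulis invariants provides a left inverse $(\alpha(g_i), \alpha(h_j))_{i \in {\mathbb I}, j \in {\mathbb J}}$ to $\widetilde{L}_0$, so $\widetilde{L}_0$ is injective.

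The only delicate point, more bookkeeping than genuine obstacle, is the consistency of the sequential construction across the dividing curves: each $h_j$ belongs to both $P_j$ and $P_{j+1}$, and one must confirm that the value assigned to $h_j$ while treating $P_j$ agrees with the datum fed into $P_{j+1}$. This holds automatically because the value on $h_j$ (and hence on $h_j^{-1}$ via ${\bf u}(h_j^{-1}) = -h_j^{-1}{\bf u}(h_j)$) is fixed once and then only reused, never recomputed. Granting this, linearity and the cocycle property are consequences of Lemmas~\ref{cocycle on pant} and~\ref{each pants}, and injectivity is the Margulis-invariant argument above.
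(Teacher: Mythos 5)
Your proof is correct and follows essentially the route the paper intends: the paper gives no explicit proof, deriving the lemma directly ``from the construction, Lemma~\ref{cocycle on pant} and Lemma~\ref{each pants},'' and establishes injectivity by the same observation you make --- that Margulis invariants vanish on coboundaries and hence descend to ${\rm H}^1(G_b,{\mathbb R}_1^2)$ (an argument the paper spells out only in the subsequent proposition, ``We note that all Margulis invariants are zero. Hence ${\bf \alpha}={\bf \beta}={\bf 0}$''). Your write-up simply supplies the details, including the verification $B(\delta_{\bf v}(g),{\bf X}_g^0)=0$ via $g{\bf X}_g^0={\bf X}_g^0$, that the paper leaves implicit.
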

Next we define special cocycles, which we call {\it affine twists}.
\begin{definition}
An {\rm affine twist} ${\rm AT}_k \, (k \in {\mathbb J})$ is a cocycle which is defined by
\begin{eqnarray*}
{\rm AT}_k |_{P_l} &=& {\bf 0}, \\
{\rm AT}_k |_{P_m} &=& \delta_{{\bf Y}_k^0} |_{P_m},
\end{eqnarray*}
where $l \in \{ 1, \ldots, k \}$ and $m \in \{ k+1, \ldots, b-1 \}$. We note that the affine twist is a well-defined cocycle by Lemma $\ref{each pants}$.
\end{definition}

Finally we will extend the domain of the linear map $L_0$ from $D_b^0$ to $D_b$ by using the affine twists. 
On the other hand, the conclusion of Theorem $\ref{Linear isomorphism}$ comes up with the following proposition.
\begin{proposition}
We define a map $\Phi$ as follows:
\begin{eqnarray*}
\Phi : D_b &\to & {\rm Z}^1(G_b, {\mathbb R}_1^2) \\
({\bf \alpha}, {\bf \beta}, {\bf t}) &\mapsto & {\bf u}_{{\bf t}}^{{\bf \alpha}, {\bf \beta}} := {\bf u}_0^{\alpha, \beta}
+ \sum_{k=1}^{b-2} t_k {\rm AT}_k .
\end{eqnarray*}
Then $\Phi$ descends to a linear isomorphism $\widetilde{\Phi}:D_b \to {\rm H}^1(G_b, {\mathbb R}_1^2)$.
\end{proposition}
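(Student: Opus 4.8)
The plan is to exploit the equality of dimensions. By Remark \ref{dimension} we have $\dim {\rm H}^1(G_b, {\mathbb R}_1^2) = 3b-3 = \dim D_b$, so it suffices to prove that $\widetilde{\Phi}$ is injective. That $\Phi$ actually lands in ${\rm Z}^1(G_b,{\mathbb R}_1^2)$ and descends to a well-defined linear map $\widetilde{\Phi}$ is immediate: $\mathbf{u}_0^{\alpha,\beta}$ is a cocycle by the construction preceding this proposition, each ${\rm AT}_k$ is a cocycle by Lemma \ref{each pants}, any linear combination of cocycles is a cocycle, and passing to ${\rm H}^1$ is linear. So the whole content is injectivity, which I would split into a ``length'' part and a ``twist'' part.

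For the length part I would introduce the Margulis-invariant map and first record that it is a cohomological invariant. Writing $\alpha(g) = B(\mathbf{u}(g), {\bf X}_g^0)$ and using that every $g \in SO^0(2,1)$ preserves $B$ and fixes ${\bf X}_g^0$, a coboundary $\delta_{\bf v}(g) = {\bf v} - g{\bf v}$ contributes $B({\bf v},{\bf X}_g^0) - B(g{\bf v}, g{\bf X}_g^0) = 0$; hence $\alpha(g)$ depends only on the class $[\mathbf{u}]$. Thus the map $M : {\rm H}^1(G_b,{\mathbb R}_1^2) \to {\mathbb R}^{2b-1}$, $[\mathbf{u}] \mapsto (\alpha(g_i),\alpha(h_j))_{i \in {\mathbb I},\, j \in {\mathbb J}}$, is well defined. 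By the construction of step $(1)$--$(4)$, the cocycle $\mathbf{u}_0^{\alpha,\beta}$ realizes $\alpha(g_i)=\alpha_i$ and $\alpha(h_j)=\beta_j$, while each affine twist ${\rm AT}_k$ has \emph{vanishing} Margulis invariant on every $g_i$ and $h_j$: on the pants where ${\rm AT}_k$ is $\mathbf{0}$ this is trivial, and on the pants where it equals $\delta_{{\bf Y}_k^0}$ it follows from the coboundary computation just given. Therefore $M(\widetilde{\Phi}({\bf \alpha},{\bf \beta},{\bf t})) = ({\bf \alpha},{\bf \beta})$, so $\widetilde{\Phi}({\bf \alpha},{\bf \beta},{\bf t})=0$ already forces ${\bf \alpha}={\bf 0}$ and ${\bf \beta}={\bf 0}$ (this is also exactly the injectivity of $\widetilde{L}_0$ on $D_b^0$).

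It remains to show the twist classes are independent, i.e. that $\sum_k t_k {\rm AT}_k$ is a coboundary only when ${\bf t}={\bf 0}$; this is the heart of the argument, and I would prove it by restricting to each pair of pants. From the definition of ${\rm AT}_k$ (vanishing on $P_l$ for $l\le k$, equal to $\delta_{{\bf Y}_k^0}$ on $P_m$ for $m>k$), on the pants $P_j$ one gets $\sum_k t_k {\rm AT}_k|_{P_j} = \delta_{w_j}|_{P_j}$ with $w_j := \sum_{k<j} t_k {\bf Y}_k^0$, so in particular $w_1 = {\bf 0}$. Now suppose $\sum_k t_k {\rm AT}_k = \delta_{\bf v}$. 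Restricting to $P_j$ gives $\delta_{w_j - {\bf v}}|_{P_j} = 0$, which means $w_j - {\bf v}$ is fixed by every element of $P_j$. The key geometric input is that a pair-of-pants subgroup of $SO^0(2,1)$ has no nonzero common fixed vector in ${\mathbb R}_1^2$: two of its boundary generators are hyperbolic with distinct invariant axes, hence with non-parallel $+1$-eigenvectors ${\bf X}^0$, so their only common fixed vector is $\mathbf{0}$. Consequently $w_j = {\bf v}$ for every $j$; from $w_1 = {\bf 0}$ we obtain ${\bf v}={\bf 0}$, and then $t_j {\bf Y}_j^0 = w_{j+1} - w_j = {\bf 0}$ yields $t_j = 0$ for all $j$ since ${\bf Y}_j^0 \neq {\bf 0}$.

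The main obstacle is precisely this independence step, because the Margulis invariants cannot detect the twists at all (they are ``length-neutral,'' mirroring the way the Fenchel--Nielsen twist preserves lengths), so a genuinely new invariant is required. The telescoping along the chain $P_1, \ldots, P_{b-1}$, powered by the no-common-fixed-vector property of each pants group, supplies exactly this. I would emphasize that it is essential to compare \emph{neighbouring} pants rather than only the two extreme ones: the vectors ${\bf Y}_k^0$ are generically far from linearly independent in the $3$-dimensional space ${\mathbb R}_1^2$, so the naive comparison of $P_1$ and $P_{b-1}$ alone only yields $\sum_k t_k {\bf Y}_k^0 = {\bf 0}$, whereas the step-by-step differences $w_{j+1}-w_j$ isolate each $t_j$ individually.
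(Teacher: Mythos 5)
Your proposal is correct, and its skeleton---reduce to injectivity via the dimension count of Remark~\ref{dimension}, kill $({\bf \alpha},{\bf \beta})$ with the Margulis invariants (which vanish on coboundaries), then extract the $t_k$ pants by pants---is the same as the paper's. Where you genuinely diverge is in the twist-independence step, which is indeed the heart of the matter. The paper keeps the full cocycle ${\bf u}_{\bf t}^{\alpha,\beta}$ in play: it uses the cocycle condition on $P_1$ (i.e.\ Lemma~\ref{cocycle on pant}) to force the auxiliary coefficients $c_2^+, d_1^{\pm}$ to vanish, concludes ${\bf u}|_{P_1}={\bf 0}$ and hence $v={\bf 0}$, and then on $P_2$ reads off $t_1=0$ from the linear independence of ${\bf X}_3^+$ and $\delta_{{\bf Y}_1^0}(g_3)$, continuing inductively. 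You instead note that ${\bf \alpha}={\bf \beta}={\bf 0}$ already forces ${\bf u}_0^{\alpha,\beta}={\bf 0}$ (by linearity of $L_0$---a step you leave implicit, but it is immediate from the lemma preceding the proposition, so there is no gap), reducing everything to the claim that $\sum_k t_k{\rm AT}_k$ is a coboundary only for ${\bf t}={\bf 0}$; you settle this with the telescoping vectors $w_j=\sum_{k<j}t_k{\bf Y}_k^0$ and the fact that a pair-of-pants subgroup of $SO^0(2,1)$ fixes no nonzero vector of ${\mathbb R}_1^2$. Your route buys a cleaner, coefficient-free argument whose only geometric input is that no-common-fixed-vector property (which the paper also uses, tacitly, when it passes from ${\bf u}|_{P_1}={\bf 0}$ to $v={\bf 0}$), and your closing remark about comparing \emph{neighbouring} pants rather than extreme ones is exactly the right caution, since the ${\bf Y}_k^0$ need not be linearly independent in the $3$-dimensional space ${\mathbb R}_1^2$. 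The paper's version, by contrast, re-derives the vanishing of the $c$- and $d$-coefficients concretely on each pants, which makes the induction more computational but also exhibits the specific vectors whose independence drives it.
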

\begin{proof}
$\widetilde{\Phi}(D_b) \subset {\rm H}^1(G_b, {\mathbb R}_1^2)$ is trivial. We show that $\widetilde{\Phi}(D_b)$ has $(3b-3)$-dimensions. Set ${\bf u}:= {\bf u}_{{\bf t}}^{\alpha, \beta}$ and suppose that $[{\bf u}]$ equals $[{\bf 0}]$.

We note that all Margulis invariants are zero. Hence ${\bf \alpha} = {\bf \beta} = {\bf 0}$. 
Next we show that $t_1=0$. Since $[{\bf u}]=[{\bf 0}]$, there exists $v \in {\mathbb R}_1^2$ such that ${\bf u} = \delta_v$.
On the pair of pants $P_1$, ${\bf u} (g_1) = {\bf 0}, {\bf u}(g_2) = c_2^+ {\bf X}_2^+, {\bf u}(h_1) = d_1^- {\bf Y}_1^- + d_1^+ {\bf Y}_1^-$ follow. By the cocycle condition, we have $c_2^+ = d_1^{\mp}=0$. Hence ${\bf u}|_{P_1} = {\bf 0}$. So we have $v={\bf 0}$. 
We attend the pair of pants $P_2$. On $P_2$, ${\bf u}(h_1)={\bf 0}, {\bf u}(g_3) = c_3^+ {\bf X}_3^+ + t_1 \delta_{{\bf Y}_1^0} (g_3), {\bf u}(h_2) = d_2^- {\bf Y}_2^- + d_2^+ {\bf Y}_2^+ + t_1 \delta_{{\bf Y}_1^0} (h_2)$. We can easily check the linear independence of ${\bf X}_3^+$ and $\delta_{{\bf Y}_1^0} (g_3)$. Therefore the linear combination $c_3^+ {\bf X}_3^+ + t_1 \delta_{{\bf Y}_1^0} (g_3)$ is zero if and only if $c_3^+= t_1=0$. Then the cocycle condition says that $d_2^{\pm} = 0$. Then we obtain $t_k=0 \, (k \geq 2)$ inductively.
\end{proof}

\paragraph{Remark}
Here we concretely show that all cocycles are defined by our construction up to translation. This observation indicates a geometrical view of the affine twists in the Minkowski space-time. Furthermore this construction corresponds to ours of $\Phi$.

$(1)$ On $P_1$, we determine the value of cocycle of the generators. We decide the Margulis invariants $\alpha_1, \alpha_2, \beta_1$. We assign the translation for $c_1^{\mp}= c_2^- = 0$. These values determine $d_1^{\pm}$. Hence all values on $P_1$.

$(2)$ To determine the values of $P_2$, evaluate the ambiguity.
Then by the cocycle condition, one-dimensional ambiguity still remains.

$(3)$ 
From Charette's works $\cite{c2003} \cite{c2006} \cite{c2009}$, we can note that three invariant axes in $E_1^2$ corresponding to the generators of the pair of pants determines the affine deformation of it.
Now the three invariant axes $C_{\gamma_1}, C_{\gamma_2}, C_{\eta_1}$ are absolutely determined in $E_1^2$. On the other hand, the relativity position among $C_{\eta_1}, C_{\gamma_2}$ and $C_{\eta_2}$ are also determined (namely up to translation). Since there exists the one-ambiguity, it is only to be the direction to $C_{\eta_1}$ (namely ${\bf Y}_1^0$). Note that there exists the position such that $c_3^-=0$. Therefore we decide the ambiguity as the twist parameter. 

$(4)$ On $P_j (j \geq 3)$, we can determine the values of cocycle in the only same way.


\section{Relation to deformations of the hyperbolic structures}
\label{relation to deformations of the hyperbolic structures}
In this section, we will show that an affine twist canonically corresponds to the Fenchel-Nielsen twist along the associated curve. In order to show it, we use a relation between the Lorentzian vector space and a Lie algebra. The discussion is introduced in Goldman and Margulis \cite{gm2000}.

\subsection{Lie algebra and Lorentzian space-time} 
The Lie algebra $sl_2({\mathbb R})$ of ${\rm SL}(2, {\mathbb R})$ has a correspondence with the Lorentzian  space-time ${\mathbb R}_1^2$.
The Lie algebra $sl_2({\mathbb R})$ is a tangent space of $SL(2, {\mathbb R})$ at a unit elements. Furthermore it is isomorphic to a subspace in ${\rm Mat}(2, {\mathbb R})$ of $2 \times 2$ matrices whose element ${\bf X}$ satisfies ${\rm tr}({\bf X})=0$. The Lie algebra $sl_2({\mathbb R})$ is isometric to ${\mathbb R}_1^2$. Indeed,
we take $\{ {\bf e}_1, {\bf e}_2, {\bf e}_3 \}$ as basis.
\begin{eqnarray}
{\bf e}_1 := \left[
\begin{array}{cc}
1 & 0 \\
0 & -1 \\
\end{array}
\right] , {\bf e}_2 := \left[
\begin{array}{cc}
0 & 1 \\
1 & 0 \\
\end{array}
\right] , {\bf e}_3 := \left[
\begin{array}{cc}
0 & -1 \\
1 & 0 \\
\end{array}
\right]
\end{eqnarray}
A {\rm Killing form} is represented by
\begin{eqnarray}
\tilde{B}({\bf X}, {\bf Y}) = \frac{1}{2} {\rm tr}({\bf X} {\bf Y}), 
\end{eqnarray}
where ${\bf X}, {\bf Y} \in sl_2({\mathbb R})$.

\begin{definition}[\cite{gm2000}]
We define a linear map $\psi$
\begin{eqnarray}
\psi : sl_2({\mathbb R}) {\rightarrow} {\mathbb R_1^2} , \, \,  \left[
\begin{array}{cc}
v_1 & v_2 \\
v_3 & -v_1 \\
\end{array}
\right] {\mapsto}  \left[
\begin{array}{c}
v_1 \\
\frac{v_2 + v_3}{2} \\
\frac{-v_2 + v_3}{2} \\
\end{array}
\right]
\end{eqnarray}
\end{definition}
In fact, the linear map $\psi$ is an isomorphism between $sl_2({\mathbb R})$ and ${\mathbb R}_1^2$. Furthermore the Killing form $\widetilde{B}$ is compatible with the Lorentzian inner product $B$ via the isomorphism $\psi$.

\subsection{Margulis invariants and Teichmuller space}
A cocycle can naturally deform the hyperbolic structures after Goldman and Margulis \cite{gm2000}.
We take an eigenvalue $\pm \mu, \, (0<\mu<1)$ of $\widetilde{g} \in SL(2, {\mathbb R})$. We consider the translation length $\ell(\widetilde{g}) = -2 \log{\mu}$.
Under $\psi$, we regard ${\bf u}(g)$ as an element in $sl_2(\mathbb R)$.
Furthermore for $g \in SO^0(2,1)$, we define $\widetilde{g} \in {\rm PSL}(2, {\mathbb R})$, which is corresponding to $g$ under the isomorphism $\psi$. 
Let $\widetilde{\iota}$ be in ${\rm Hom}(G_b, SL(2, {\mathbb R}))$, where
$\widetilde{\iota}(\widetilde{g}) = \widetilde{g} \exp({\bf u}(g)).$ Then we consider the deformation $\widetilde{\iota_t}(\widetilde{g}) = \widetilde{g} \exp(t {\bf u}(g) + O(t^2))$.
The interval of $t$ which can define $\widetilde{\iota}_t(\widetilde{g})$ is denoted by $I_{\widetilde{g}}$. We define the function $L_g^{{\bf u}} : I_{\widetilde{g}} \rightarrow {\mathbb R}, \,\, t \mapsto  \ell (\widetilde{\iota_t}(\widetilde{g})).$ 

\begin{theorem}[\cite{gm2000}]
\begin{eqnarray}
\label{Goldman Margulis}
\alpha_{\bf u}(g) = \frac{1}{2} \frac{d L_g^{{\bf u}}}{dt} (0)
\end{eqnarray}
holds.
\end{theorem}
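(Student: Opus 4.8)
The plan is to reduce the identity to a trace computation in $SL(2,\mathbb{R})$ and then convert that trace into the Lorentzian inner product through the isometry $\psi$. Throughout write $U := \psi^{-1}({\bf u}(g)) \in sl_2(\mathbb{R})$ for the trace-free matrix corresponding to the cocycle value, and set $A(t) := \widetilde{\iota}_t(\widetilde{g}) = \widetilde{g}\exp(tU + O(t^2))$, so $A(0) = \widetilde{g}$. Since $\widetilde{g}$ is hyperbolic and hyperbolicity is an open condition, $A(t)$ stays hyperbolic for $t$ near $0$; hence $L_g^{{\bf u}}(t) = \ell(A(t))$ is smooth near $0$ and the derivative in the statement is meaningful.

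First I would express the \emph{translation length} through the trace. For a hyperbolic $A \in SL(2,\mathbb{R})$ one has $|{\rm tr}(A)| = 2\cosh(\ell(A)/2)$. Passing to the lift with positive trace (which leaves $\ell$ unchanged), I differentiate ${\rm tr}(A(t)) = 2\cosh(\ell(A(t))/2)$ at $t=0$. Using $\left.\frac{d}{dt}\exp(tU + O(t^2))\right|_{t=0} = U$ and linearity of the trace, this gives
\[
\frac{d L_g^{{\bf u}}}{dt}(0) = \frac{1}{\sinh(\ell/2)}\left.\frac{d}{dt}{\rm tr}(A(t))\right|_{t=0} = \frac{{\rm tr}(\widetilde{g}U)}{\sinh(\ell/2)},
\]
where $\ell := \ell(\widetilde{g})$.

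Next I would compute ${\rm tr}(\widetilde{g}U)$. Because the trace and both bilinear forms are conjugation-invariant, I may conjugate so that $\widetilde{g}$ is diagonalized by ${\bf e}_1$; as $\ell = -2\log\mu$, this means $\widetilde{g} = \exp(\tfrac{\ell}{2}{\bf e}_1)$, and since ${\bf e}_1^2 = I$ we get $\widetilde{g} = \cosh(\ell/2)\,I + \sinh(\ell/2)\,{\bf e}_1$. With ${\rm tr}(U) = 0$ this yields
\[
{\rm tr}(\widetilde{g}U) = \sinh(\ell/2)\,{\rm tr}({\bf e}_1 U) = 2\sinh(\ell/2)\,\widetilde{B}({\bf e}_1, U).
\]
Finally, $\psi({\bf e}_1)$ is the unit spacelike vector fixed by $\widetilde{g}$, and a check against the orientation condition $\det({\bf X}_g^0, {\bf X}_g^-, {\bf X}_g^+) > 0$ identifies it with ${\bf X}_g^0$. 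Using the compatibility of $\widetilde{B}$ with $B$ under $\psi$, the eigendecomposition ${\bf u}(g) = \alpha_{{\bf u}}(g){\bf X}_g^0 + c^-{\bf X}_g^- + c^+{\bf X}_g^+$, and $B({\bf X}_g^0, {\bf X}_g^{\pm}) = 0$, $B({\bf X}_g^0, {\bf X}_g^0) = 1$, I get $\widetilde{B}({\bf e}_1, U) = B({\bf X}_g^0, {\bf u}(g)) = \alpha_{{\bf u}}(g)$. Substituting back gives $\frac{d L_g^{{\bf u}}}{dt}(0) = 2\alpha_{{\bf u}}(g)$, which is the claim.

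The main obstacle is the bookkeeping of signs in the last step: one must confirm that the orientation normalization forces $\psi({\bf e}_1) = +{\bf X}_g^0$ rather than $-{\bf X}_g^0$, and that the positive-trace branch is the correct one, so that the constant is exactly $+2$. Getting either convention backwards flips the sign of the identity, so the delicate point is verifying that the identification $SO^0(2,1) \cong {\rm PSL}(2,\mathbb{R})$, the sign in $\ell = -2\log\mu$, and the orientation of the eigenframe are mutually consistent.
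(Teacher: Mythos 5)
This theorem is quoted from \cite{gm2000}; the paper gives no proof of its own, so there is nothing internal to compare against. Your argument is correct and is essentially the standard derivation of the Goldman--Margulis identity: differentiate ${\rm tr}\,A(t)=2\cosh(\ell(A(t))/2)$, reduce ${\rm tr}(\widetilde{g}U)$ to $2\sinh(\ell/2)\,\widetilde{B}({\bf e}_1,U)$ after diagonalizing, and transport through $\psi$. The one delicate point you flag does resolve favorably: with $\widetilde{g}=\mathrm{diag}(e^{\ell/2},e^{-\ell/2})$ the null eigendirections map under $\psi$ to $(0,\tfrac12,\tfrac12)$ (eigenvalue $\lambda$) and $(0,\tfrac12,-\tfrac12)$ (eigenvalue $\lambda^{-1}$), and the determinant condition $\det({\bf X}_g^0,{\bf X}_g^-,{\bf X}_g^+)>0$ forces $\psi({\bf e}_1)=+{\bf X}_g^0$ precisely when ${\bf X}_g^{\pm}$ are normalized to be future-pointing --- a convention the paper leaves implicit but which is also needed for its consistently-oriented inequalities $B({\bf X}_{g_m}^0,{\bf X}_{g_n}^{\pm})<0$ to be meaningful. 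Modulo making that normalization explicit, your proof is complete.
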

\subsection{Deformations along affine twists}
We apply the equation $\eqref{Goldman Margulis}$ for the affine twists.
Let $f_l (l \in {\mathbb J})$ denote $g_{l+2}^{-1} g_{l+1}^{-1}$. The translation length $\ell (f_l)$ implies the Fenchel-Nielsen twist along $h_l$. Note that $g_{l+1}$ is in $P_l$ and $g_{l+2}$ is in $P_{l+1}$. See Figure $\ref{twist}$.

\begin{figure}[htbp] 
\begin{center}
\begin{tabular}{c}
\begin{minipage}{0.5\hsize}
\begin{center}
\includegraphics[width=6cm]{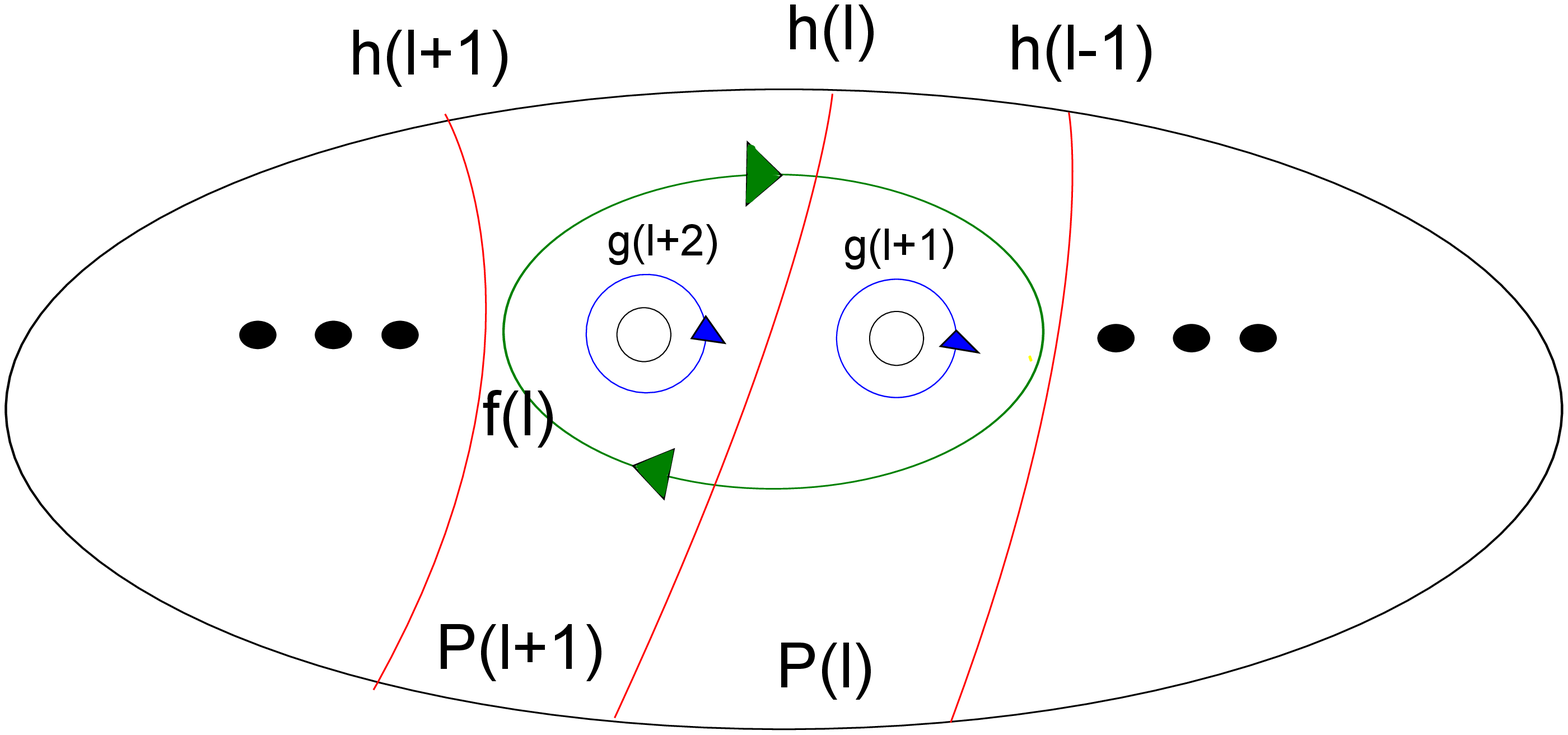}
\hspace{1.6cm}
\end{center}
\end{minipage}
\begin{minipage}{0.33\hsize}
\begin{center}
\includegraphics[width=4cm]{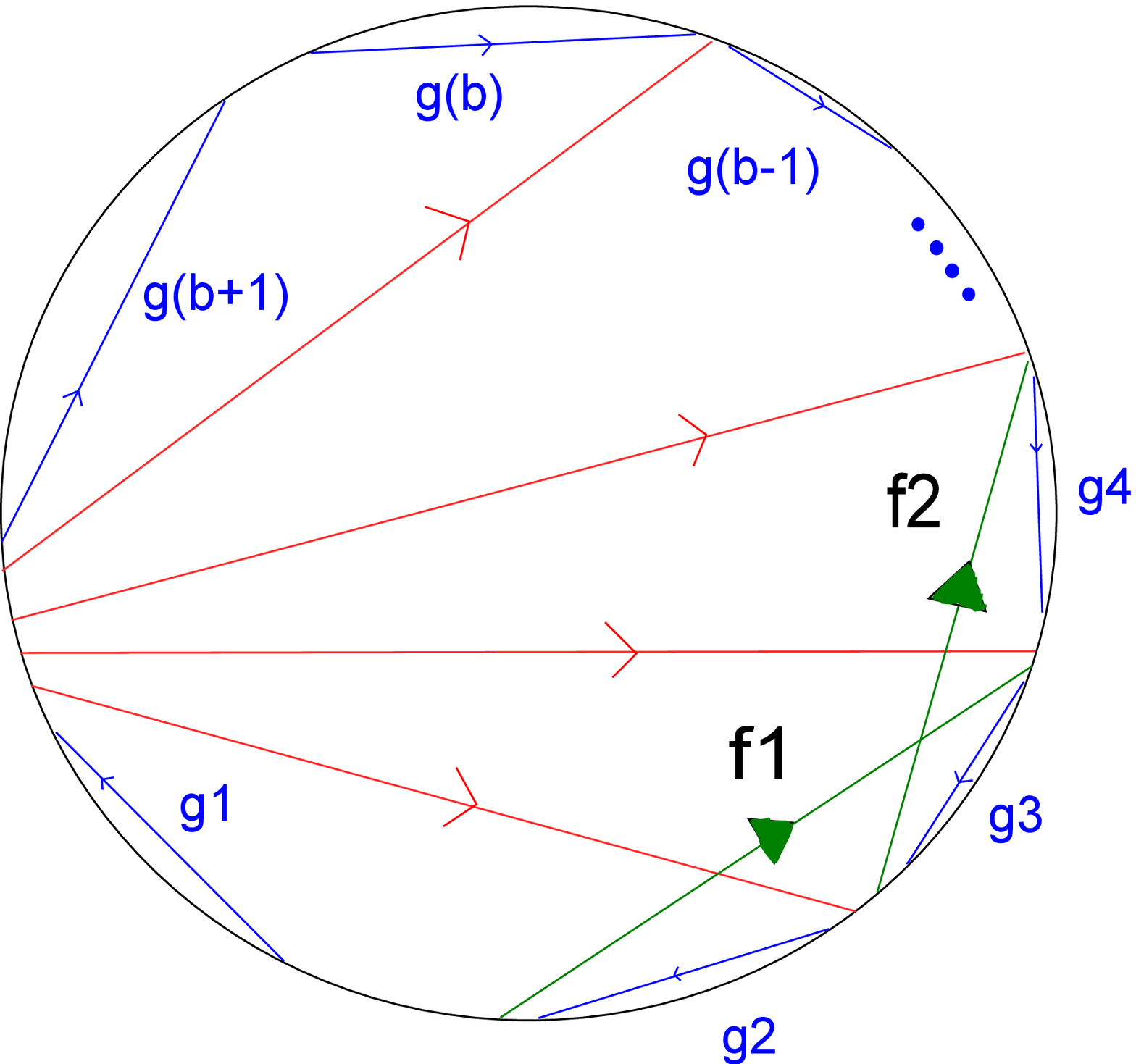}
\hspace{1.6cm}
\end{center}
\end{minipage}
\end{tabular}
\caption{The simple closed curves corresponding to the twists}
\label{twist}
\end{center}
\end{figure} 
We consider a general cocycle ${\bf u} = {\bf u}_0 + \sum_{k=1}^{b-2} t_k {\rm AT}_k$ (${\bf u}_0 \in L_0(D_b^0)$), and calculate the value of $f_l$. By definition, we have ${\bf u}(f_l)={\bf u}_0 (f_l) + \sum_{k=1}^{l} t_k {\rm AT}_k(f_l)$.
We calculate the Margulis invariant $\alpha_{{\bf u}}(f_l)$.
\begin{eqnarray*}
B({\bf u}(f_l), {\bf X}_{f_l}^0) &=& B({\bf u}_0 (f_l) + \sum_{k=1}^{l} t_k {\rm AT}_k(f_l), {\bf X}_{f_l}^0) \\
&=& B({\bf u}_0 (f_l), {\bf X}_{f_l}^0) + t_l B({\rm AT}_{l}(f_l), {\bf X}_{f_l}^0) \\
&=& B({\bf u}_0 (f_l), {\bf X}_{f_l}^0) + t_l B( g_{l+2}^{-1} {\rm AT}_{l}(g_{l+1}^{-1}) + {\rm AT}_{l}(g_{l+2}^{-1}), {\bf X}_{f_l}^0) \\
&=& B({\bf u}_0 (f_l), {\bf X}_{f_l}^0) + t_l B( \delta_{{\bf Y}_1^0}(g_{l+2}^{-1}), {\bf X}_{f_l}^0).
\end{eqnarray*}
The second equation holds because ${\rm AT}_k$ is just the coboundary under restricting on a free product $P_{l} * P_{l+1}$ for $k<l$.
Because $g_{l+2}^{-1} = f_l g_{l+1}$, we can note $B( \delta_{{\bf Y}_1^0}(g_{l+2}^{-1}), {\bf X}_{f_l}^0) = B( \delta_{{\bf Y}_1^0}(g_{l+1}), {\bf X}_{f_l}^0)$.
Let $L^{{\bf u}}_{f_l}(t)$ denote the function which deforms hyperbolic structure of $f_l$ defined by Goldman and Margulis. Then we obtain 
\begin{eqnarray*}
\frac{1}{2} \frac{d L_{f_l}^{{\bf u}}}{dt} (0) = \alpha_{{\bf u}}(f_l) = \alpha_{{\bf u}_0}(f_l) + t_l B( \delta_{{\bf Y}_1^0}(g_{l+1}), {\bf X}_{f_l}^0).
\end{eqnarray*}
The following lemma satisfies the main theorem $\ref{wolpert goldman margulis}
$.
\begin{figure}[htbp] 
\begin{center}
\begin{tabular}{c}
\begin{minipage}{0.5\hsize}
\begin{center}
\includegraphics[width=6cm]{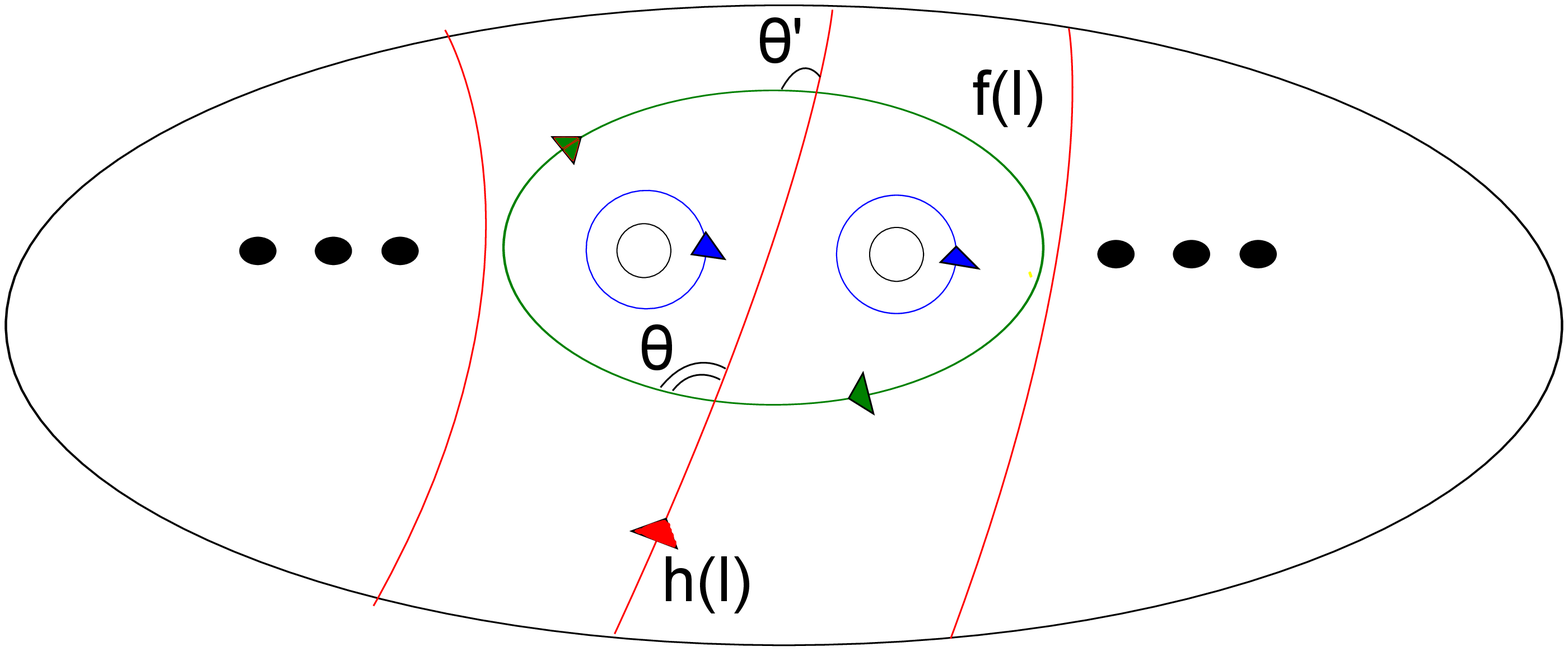} 
\hspace{1.6cm}
\end{center}
\end{minipage}
\begin{minipage}{0.33\hsize}
\begin{center}
\includegraphics[width=4cm]{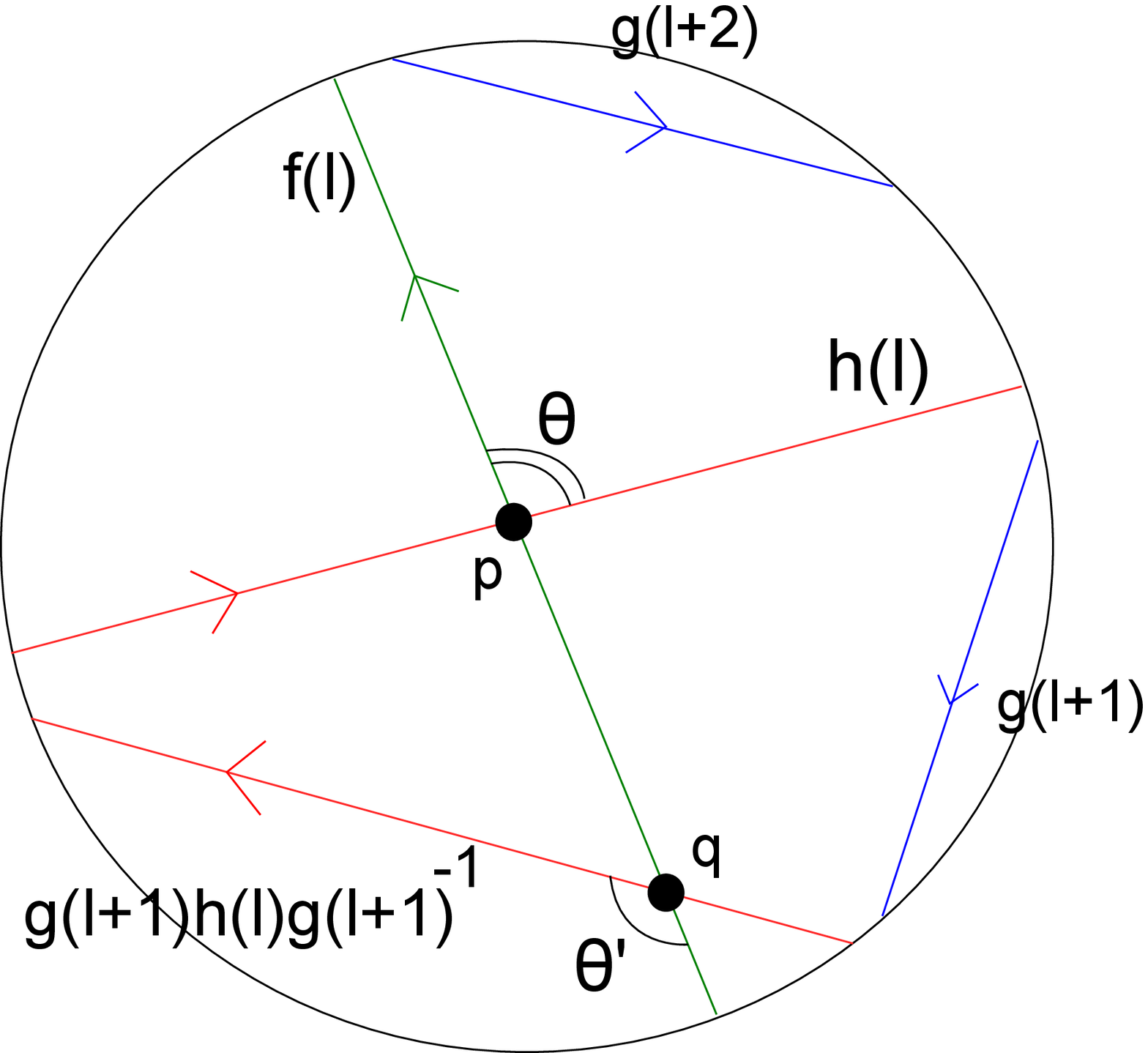} 
\hspace{1.6cm}
\end{center}
\end{minipage}
\end{tabular}
\caption{Angles}
\label{angles} 
\end{center}
\end{figure} 

\begin{lemma}
$$
B( \delta_{{\bf Y}_1^0}(g_{l+1}), {\bf X}_{f_l}^0) = \cos{\theta_l} + \cos{\theta_l'}
$$
holds, where $\theta_l , \theta_l'$ are angles between the closed curves $h_l$ and $f_l$ in $S_{b+1}$. The angles are defined by the segment of the left side of $f_l$ along $h_l$. (Figure $\ref{angles}$.)
\end{lemma}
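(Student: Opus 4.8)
The plan is to unwind the coboundary $\delta_{{\bf Y}_l^0}$ (here ${\bf Y}_l^0 = {\bf X}_{h_l}^0$ is the neutral eigenvector of the dividing element $h_l$) and reduce the whole computation to two applications of Lemma~\ref{angle}. By the definition of a coboundary, $\delta_{{\bf Y}_l^0}(g_{l+1}) = {\bf Y}_l^0 - g_{l+1}{\bf Y}_l^0$, so bilinearity of $B$ gives
\[
B(\delta_{{\bf Y}_l^0}(g_{l+1}), {\bf X}_{f_l}^0) = B({\bf Y}_l^0, {\bf X}_{f_l}^0) - B(g_{l+1}{\bf Y}_l^0, {\bf X}_{f_l}^0).
\]
The first term is handled directly: the invariant axis of $h_l$ is the geodesic $h_l$ and the invariant axis of $f_l = g_{l+2}^{-1}g_{l+1}^{-1}$ crosses it, so Lemma~\ref{angle} yields $B({\bf Y}_l^0, {\bf X}_{f_l}^0) = \cos\theta_l$, where $\theta_l$ is the angle at this first intersection.

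For the second term I would first identify $g_{l+1}{\bf Y}_l^0$ as an honest neutral eigenvector. Since $g_{l+1} \in SO^0(2,1)$ preserves both orientation and time-orientation and acts as an isometry for $B$, conjugation sends the eigendata of $h_l$ to that of $g_{l+1}h_l g_{l+1}^{-1}$ while preserving the normalization and the sign condition $\det({\bf X}^0,{\bf X}^-,{\bf X}^+)>0$ (because $\det g_{l+1}=1$). Hence $g_{l+1}{\bf Y}_l^0 = {\bf X}^0_{g_{l+1}h_l g_{l+1}^{-1}}$, and its invariant axis is $g_{l+1}\tilde h_l$, the $g_{l+1}$-translate of the lift $\tilde h_l$. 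Geometrically this is exactly the \emph{second} lift of the geodesic $h_l$ that the axis of $f_l$ meets, which is consistent with $f_l$ intersecting $h_l$ twice in $S_{b+1}$. A further application of Lemma~\ref{angle} then gives $B(g_{l+1}{\bf Y}_l^0, {\bf X}_{f_l}^0) = \cos\theta''$, where $\theta''$ is the angle between $g_{l+1}\tilde h_l$ and the $f_l$-axis at this second crossing.

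The main obstacle is the apparent sign discrepancy: the computation above produces $\cos\theta_l - \cos\theta''$, whereas the lemma asserts the \emph{sum} $\cos\theta_l + \cos\theta_l'$. The resolution is a careful orientation bookkeeping. The neutral eigenvector ${\bf X}^0$ encodes a co-orientation of the axis, and the two crossings of $f_l$ with $h_l$ are traversed with opposite transverse orientation: as one travels along the $f_l$-axis, it enters and then exits the region on a given side of $h_l$, so the group-equivariant co-orientation carried by $g_{l+1}{\bf Y}_l^0$ points to the opposite side of $f_l$ relative to the fixed ``left side of $f_l$ along $h_l$'' convention used to define the angles. I would make this precise by working in the universal cover with the second intersection normalized to $(0,0,1)$ exactly as in the proof of Lemma~\ref{angle}, and checking that this reversal forces $\theta'' = \pi - \theta_l'$, hence $\cos\theta'' = -\cos\theta_l'$. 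Substituting gives
\[
B(\delta_{{\bf Y}_l^0}(g_{l+1}), {\bf X}_{f_l}^0) = \cos\theta_l - \cos\theta'' = \cos\theta_l + \cos\theta_l',
\]
as claimed. This orientation-reversal step is the genuine content of the lemma and the one place where the convention in the figure must be matched carefully to the sign of $B$; the two cosine identities themselves are immediate from Lemma~\ref{angle}. It is also precisely the feature that reproduces Wolpert's cosine formula, where the two intersection angles enter additively for the same orientation reason.
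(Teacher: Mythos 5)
Your proposal is correct and follows essentially the same route as the paper: expand $\delta_{{\bf Y}_l^0}(g_{l+1})={\bf Y}_l^0-g_{l+1}{\bf Y}_l^0$, apply Lemma~\ref{angle} at each of the two intersection points (the second one being the crossing of the $f_l$-axis with the $g_{l+1}$-translate of the $h_l$-axis), and absorb the minus sign by observing that the angle convention at the second crossing gives $\pi-\theta_l'$. The paper's proof is exactly this computation, stated with the translate applied to the $f_l$-axis instead of the $h_l$-axis, which is equivalent by $B$-invariance.
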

\begin{proof}
Note that $f_l$ and $h_l$ have the two intersections on $S_{l+1}$. In ${\mathbb H}^2$, one of the points corresponds the intersection of the invariant lines determined by $h_l$ and $f_l$, which we denote by $p$. Another does the intersection of the invariant lines determined by $h_l$ and $g_{l+1} f_l$, which we denote by $q$.
From Lemma $\ref{angle}$ and Figure $\ref{angles}$, we obtain $\theta_k := \cos^{-1}{(B({\bf Y}_l^0, {\bf X}_{f_l}^0))}$ at $p$.
By the definition of the angle, we obtain the angle $\theta_k'$ as $\pi - \cos^{-1}{(B({\bf Y}_l^0, {\bf X}_{f_l}^0))}$.
Then we have
\begin{eqnarray*}
B( \delta_{{\bf Y}_1^0}(g_{l+1}), {\bf X}_{f_l}^0) &=& B( {\bf Y}_1^0, {\bf X}_{f_l}^0) - B(g_{l+1} {\bf Y}_1^0, {\bf X}_{f_l}^0) \\
&=& \cos{\theta_k} - \cos{(\pi - \theta_k')} \\
&=& \cos{\theta_k} + \cos{\theta_k'}.
\end{eqnarray*}
\end{proof}

The formula of Theorem $\ref{wolpert goldman margulis}$ also has a similarity with the one by Wolpert's work in $\cite{w1981}$.
Thus we can regard the affine twist ${\rm AT}_l$ as a correspondence with the Fenchel-Nielsen twist along $h_l$. 

\section{Properly discontinuous action}
\label{properly discontinuous action}
In this section, we give the examples of cocycles in ${\bf Proper}_b$. We only consider the case when $b=3$. However the cases when $b > 3$ are treated in the same manner.
${\bf Proper}_3$ is a subspace of ${\rm H}^1(G_3, {\mathbb R}_1^2)$, whose affine deformations are properly discontinuous on ${E}_1^2$. Note that 
$$
D_3 = \{ (\alpha_1, \alpha_2, \alpha_3, \alpha_4, \beta_1, t_1) \in {\mathbb R}^6 \}.
$$
The author uses disjoint crooked planes corresponding to the generators $\gamma_1, \gamma_2, \gamma_3$. This discussion is applied by the criteria for assigning crooked planes disjointly by Charette, Drumm and Goldman (\cite{cdg2010}\cite{cdg2012}\cite{cdg2014}). Now we consider only {\it positive Margulis invariants} and {\it positive crooked planes}. Thus the author obtains the following equations:
\begin{example}
\label{D3}
We consider six crooked planes $C_i^{\pm} (i=1,2,3)$ such that $\gamma_i (C_i^-) = C_i^+$. Then, {\rm under this construction}, for any ${\rm A}_i, {\rm A}_4^i, {\rm a}_i \in {\mathbb R} \,\, (i=1, 2, 3)$, there exist $b_1, \tau_1 \in {\mathbb R}$ such that 
\begin{eqnarray*}
\alpha_1 &=& {\rm a}_1 - B({\bf X}_4^+, {\bf X}_1^0) {\rm A}_1, \\
\alpha_2 &=& {\rm a}_2 - B(g_3 {\bf X}_4^+, {\bf X}_2^0) {\rm A}_2, \\
\alpha_3 &=& {\rm a}_3 - B({\bf X}_4^+, {\bf X}_3^0) {\rm A}_3, \\
\alpha_4 &=& -B({\bf X}_4^0, {\bf X}_1^+) {\rm A}_4^1 - \lambda_2^{-1} B({\bf X}_4^0, g_1 {\bf X}_2^+) {\rm A}_4^2 -B({\bf X}_4^0, {\bf X}_3^+){\rm A}_4^3, \\
\beta_1 &=& {\rm b}_1 + B({\bf Y}_1^0, {\bf X}_4^+) {\rm A}_1 + B({\bf Y}_1^0, {\bf X}_4^+) \lambda_4 {\rm A}_2, \\
t_1 &=& \epsilon \{ \tau_1 - \lambda_3 B({\bf X}_4^+, {\bf X}_3^+) {\rm A}_3 \}.
\end{eqnarray*}
Here we explain these notations. The real number ${\rm A}_i, {\rm A}_4^i$ mean how far the base points of the two cooked planes $C_i^+, C_i^-$ are. The real number ${\rm a}_i, {\rm b}_1, \tau_1$ are determined by how the six crooked planes are assigned in $E_1^2$. The positive number $\epsilon$ is determined by only a date of the holonomy.
\end{example}
When all of these values equal zero, all crooked planes have a same base point.
We can note that the coefficients of these values are positive. Therefore all crooked planes are pairwise disjoint if and only if ${\rm A}_i, {\rm A}_4^i, {\rm a}_i, {\rm b}_1, {\rm \tau}_1$ are positive.
By Example $\ref{D3}$, the author find a subspace in ${\bf Proper}_3$. 
\begin{example}
For any positive (resp. negative) values ${\bf \alpha} \in {\mathbb R}_+^4 (resp. {\mathbb R}_-^4)$, there exists positive(resp. negative) numbers $b_1 < b'_1 $ (resp.negative) and real numbers $t_1 < t'_1$ such that $\widetilde{\Phi} ( {\bf \alpha} \times (b_1, b'_1) \times (t_1, t'_1) ) \subset {\bf Proper}_3$.

The author does not know for detail what happens in the boundaries of the intervals (namely $b_1, b'_1, t_1$ or $t'_1$). The author guesses two cases that some Margulis invariants become zero, or that more than these intervals can not be obtained under the construction.
\end{example}
However the notation which the author uses is complex and the discussion is so long. Therefore the detail is appeared elsewhere.


\end{document}